\newcommand{\iint}{\int\!\!\int}
\newtheorem{theorem}{Theorem}[section]
\newtheorem{lem}[theorem]{Lemma}
\newcommand{\ep}{\varepsilon}
\newcommand{\mg}{\mathcal{G}}
\newcommand{\mm}{\mathcal{W}}
\newcommand{\mmm}{\widetilde{\mathcal{W}}}
\newcommand{\pp}{\mathbb{P}}
\newcommand{\rr}{\mathbb{R}}
\newcommand{\tilf}{{\widetilde{F}}}
\newcommand{\tilh}{{\widetilde{h}}}
\newcommand{\tilu}{{\widetilde{U}}}
\newcommand{\cov}{\operatorname{Cov}}
\newcommand{\ee}{\mathbb{E}}
\newcommand{\ra}{\rightarrow}
\newcommand{\WF}{{\widetilde F}}
\newcommand{\wh}{{\widetilde h}}
\begin{document}
\begin{frontmatter}

\title{Estimating and understanding exponential random graph models}
\runtitle{Exponential random graph models}

\begin{aug}
\author[A]{\fnms{Sourav} \snm{Chatterjee}\corref{}\thanksref{t1}\ead[label=e1]{souravc@stanford.edu}}
\and
\author[A]{\fnms{Persi} \snm{Diaconis}\thanksref{t2}\ead[label=e2]{diaconis@math.stanford.edu}}
\runauthor{S. Chatterjee and P. Diaconis}
\affiliation{Stanford University}
%
\address[A]{Department of Statistics\\
Stanford University\\
Sequoia Hall\\
390 Serra Mall\\
Stanford, California 94305\\
USA\\
\printead{e1}\\
\hphantom{E-mail: }\printead*{e2}}
\end{aug}

\thankstext{t1}{Supported in part by NSF Grant DMS-10-05312 and a Sloan
Research Fellowship.}

\thankstext{t2}{Supported in part by NSF Grant DMS-08-04324.}

\received{\smonth{3} \syear{2011}}
\revised{\smonth{7} \syear{2013}}

%
\begin{abstract}
We introduce a method for the theoretical analysis of exponential
random graph models. The method is based on a large-deviations
approximation to the normalizing constant shown to be consistent using
theory developed by Chatterjee and Varadhan [\textit{European J.
Combin.} \textbf{32} (2011) 1000--1017]. The theory explains a host of
difficulties encountered by applied workers: many distinct models have
essentially the same MLE, rendering the problems ``practically''
ill-posed. We give the first rigorous proofs of ``degeneracy'' observed
in these models. Here, almost all graphs have essentially no edges or
are essentially complete. We supplement recent work of Bhamidi, Bresler
and Sly [\textit{2008 IEEE 49th Annual IEEE Symposium on Foundations of
Computer Science} (\textit{FOCS}) (2008) 803--812 IEEE] showing that
for many models, the extra sufficient statistics are useless: most
realizations look like the results of a simple Erd\H{o}s--R\'enyi
model. We also find classes of models where the limiting graphs differ
from Erd\H{o}s--R\'enyi graphs. A limitation of our approach, inherited
from the limitation of graph limit theory, is that it works only for
dense
graphs. 
\end{abstract}

%
\begin{keyword}[class=AMS]
\kwd[Primary ]{62F10}
\kwd{05C80}
\kwd[; secondary ]{62P25}
\kwd{60F10}
\end{keyword}
\begin{keyword}
\kwd{Random graph}
\kwd{Erd\H{o}s--R\'enyi}
\kwd{graph limit}
\kwd{exponential random graph models}
\kwd{parameter estimation}
\end{keyword}

\pdfkeywords{62F10, 05C80, 62P25, 60F10, Random graph,
Erdos--Renyi, graph limit,
exponential random graph models,
parameter estimation}

\end{frontmatter}

\section{Introduction}\label{sec1}

Graph and network data are increasingly common and a host of
statistical methods have emerged in recent years. Entry to this large
literature may be had from the research papers and surveys in Fienberg
{\cite{fienberg,fienberg2}}. One mainstay of the emerging theory are
the exponential families
%
%
\begin{equation}\label{11}
p_\beta(G)=\exp\Biggl(\sum_{i=1}^k
\beta_iT_i(G) - \psi(\beta) \Biggr),
\end{equation}
where $\beta=(\beta_1,\ldots,\beta_k)$ is a vector of real parameters,
$T_1,T_2,\ldots,T_k$ are functions on the space of graphs (e.g., the
number of edges, triangles, stars, cycles$,\ldots$), and
$\psi$ is the normalizing constant. In this paper, $T_1$ is usually
taken to be the number of edges (or a constant multiple of it). 

%
%
\begin{figure}

\includegraphics{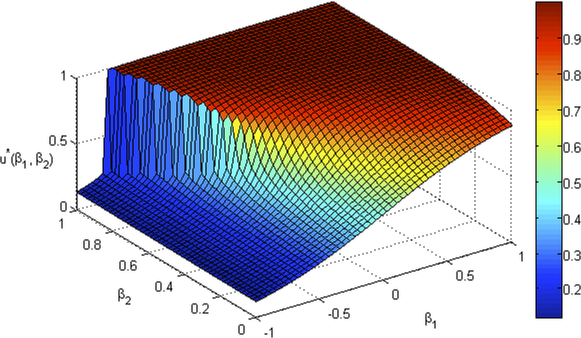}

\caption{The plot of $u^*$ against $(\beta_1,\beta_2)$. There is a
discontinuity on the left where $u^*$ jumps from near $0$ to near $1$;
this corresponds to a phase transition. (Picture by Sukhada Fadnavis.)}
\label{fig1}
\end{figure}

We review the literature of these
models in Section \ref{sec2a}. Estimating the parameters in these
models has
proved to be a challenging task. First, the normalizing constant
$\psi(\beta)$ is unknown. Second, very different values of $\beta$ can
give rise to essentially the same distribution on graphs.

Here is an example: consider the model on simple graphs with $n$ vertices,
%
%
\begin{equation}
\label{trianglemodel} p_{\beta_1,\beta_2}(G)=\exp\biggl(2\beta_1 E+
\frac{6\beta
_2}{n}\Delta-n^2\psi_n(\beta_1,
\beta_2) \biggr),
\end{equation}
where $E$, $\Delta$ denote the number of edges and triangles in the
graph $G$. The normalization of the model ensures nontrivial large
$n$ limits. Without scaling, for large $n$, almost all graphs are
empty or full. This model is studied by Strauss \cite{strauss}, Park
and Newman \cite{park04,park05}, H\"aggstrom and Jonasson \cite
{hj99}, and many others.

Theorems \ref{soln} and \ref{specialnorm} will show that for $n$ large and
nonnegative $\beta_2$,
%
%
\begin{equation}\label{12}\quad
\psi_n(\beta_1, \beta_2)\simeq\sup
_{0\leq u\leq1} \biggl(\beta_1u+\beta_2u^3-
\frac12 u\log u-\frac12(1-u)\log(1-u) \biggr).
\end{equation}
The maximizing value of the right-hand side is denoted
$u^*(\beta_1,\beta_2)$. A plot of this function appears in Figure \ref
{fig1}. Theorem \ref{specialbehave} shows that for any $\beta_1\in\rr$ and
$\beta_2>0$, with
high probability, a pick from $p_{\beta_1,\beta_2}$ is essentially the
same as an Erd\H{o}s--R\'enyi graph generated by including edges
independently with
probability $u^*(\beta_1,\beta_2)$. This phenomenon has previously
been identified by Bhamidi et al. \cite{bhamidi} and is
discussed further in Section \ref{sec2a}. Figure \ref{fig2} shows the
contour lines for Figure \ref{fig1}. All the $(\beta_1,\beta_2)$ values on
the same contour line lead to the same Erd\H{o}s--R\'enyi model in the limit.
%

%
%
\begin{figure}

\includegraphics{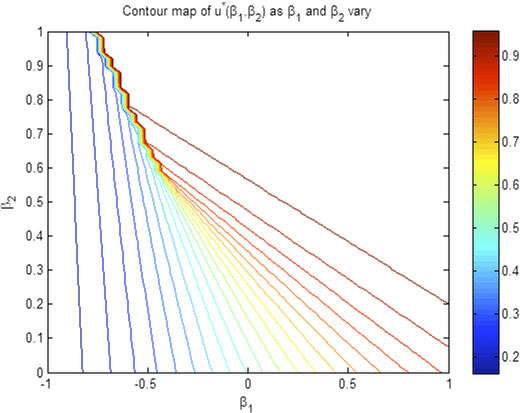}

\caption{Contour lines for Figure \protect\ref{fig1}. All pairs $(\beta
_1,\beta_2)$
on the same contour line correspond to the same value of $u^*$ and
hence those models will correspond to the same Erd\H{o}s--R\'enyi model
in the
limit. The phase transition region is seen in the upper left-hand
corner where all contour lines converge. (Picture by Sukhada Fadnavis.)}
\label{fig2}
\end{figure}

Our development uses the emerging tools of graph limits as developed
by Lov\'asz and coworkers. We give an overview in Section \ref{sec2c}.
Briefly, a sequence of graphs $G_n$ converges to a limit if the
proportion of edges, triangles, and other small subgraphs in $G_n$
converges. There is a limiting object and the space of all these
limiting objects serves as a useful compactification of the set of all
graphs. Our theory works for functions $T_i(G)$ which are continuous
in this topology. In their study of the large deviations of Erd\H
{o}s--R\'enyi random
graphs, Chatterjee and Varadhan \cite{cv10} derived the associated rate
functions in the language of graph limit theory. Their work is crucial
in the present development and is reviewed in Section~\ref{sec2d}.

Our main results are in Section \ref{sec3} through Section \ref
{sec7}. These sections contain only the statements of the theorems; all
proofs are given in Section~\ref{secproof}.

Working with general exponential models, Section \ref{sec3} gives an
extension of the approximation (\ref{12}) for $\psi_n$ (Theorem \ref{soln})
and shows that, in
the limit, almost all graphs from the model (\ref{11}) are close to
graphs where a certain functional is maximized. As will emerge, sometimes
this maximum is taken on at a unique Erd\H{o}s--R\'enyi model.

The main statistical motivation of this paper comes from the formula
for the limit of the normalizing constant given in Theorem \ref{soln}, since
the normalizing constant is crucial for the computation of maximum
likelihood estimates. At present, the computational tools used by
practitioners to compute the normalizing constants in exponential
random graph models become prohibitively time-consuming even for
moderately large $n$. The theory initiated in this paper hopes to
circumvent this problem by providing analytical formulas. As mentioned
in the abstract, the limitation of our approach is that as of now, it
applies only to dense graphs.

Incidentally, in a recent meeting at the American Institute of
Mathematics, computer-intensive calculations carried out by Mark
Handcock and David Hunter indicated that the formula given in Theorem
\ref
{soln} is actually a pretty good approximation to the exact value of
the normalizing constant even for $n$ as small as 20.

Section \ref{sec4} studies the problem for the model (\ref{11}) when
$\beta_2,\ldots,\beta_k$ are
positive ($\beta_1$ may have any sign). When the $T_i$'s are subgraph
counts, positive $\beta_i$'s were originally intentioned (e.g., in
\cite{frank}) to ``encourage'' the presence of the corresponding
subgraphs. It is shown that the
large-deviations approximation for $\psi_n$ can be easily calculated
as a one-dimensional maximization (Theorem \ref{specialnorm}). Further,
amplifying
the results of Bhamidi et al. \cite{bhamidi}, it is shown that
in these cases, almost all realizations of the model (\ref{11}) are
close to an Erd\H{o}s--R\'enyi graph (or perhaps a finite mixture of
Erd\H{o}s--R\'enyi graphs)
(Theorem \ref{specialbehave}). These mixture cases actually occur for natural
parameter values. Section \ref{sec5} also gives a
careful account of the phase transitions and near-degeneracies
observed in the edge-triangle model (\ref{12}).


Sections \ref{sec6} and \ref{sec7} investigate cases where $\beta_i$
is allowed to be negative. While the general case remains open (and
appears complicated), in Section \ref{sec6} it is shown that Theorems
\ref
{specialnorm} and \ref{specialbehave} hold as stated if $(\beta_i)_{2
\le i \le k}$ are sufficiently small in magnitude. This requires a
careful study of associated Euler--Lagrange equations. Section \ref
{sec7} shows how the results change for the model containing edges and
triangles when $\beta_2$ is negative. For sufficiently large negative
$\beta_2$, typical realizations look like a random bipartite graph
(where ``random'' means that the two parts, of equal size, are chosen
uniformly at random from all possible choices). This is very different
from the Erd\H{o}s--R\'enyi model. The result generalizes to other
models via an interesting analogy with the Erd\H{o}s--Stone theorem
from extremal graph theory. 

A longer version of this paper with more pictures and additional
results is available as ``version 3'' on arXiv
(\url{http://arxiv.org/pdf/1102.2650v3.pdf}).

\section{Background}\label{sec2}

This section gives needed background and notation in three
areas. Exponential graph models (Section \ref{sec2a}), graph limits
(Section~\ref{sec2c}), and large deviations (Section \ref{sec2d}).

\subsection{Exponential random graphs}\label{sec2a}

Let $\mg_n$ be the space of all simple graphs on $n$ labeled
vertices (``simple'' means undirected, with no loops or multiple
edges). Thus,\vadjust{\goodbreak} $\mg_n$
contains $2^{{n\choose2}}$ elements. A variety of models in active
use can be presented in exponential form
%
%
\begin{equation}\label{21}
p_\beta(G)=\exp\Biggl(\sum_{i=1}^k
\beta_iT_i(G)-\psi(\beta) \Biggr),
\end{equation}
where $\beta=(\beta_1,\ldots,\beta_k)$ is a vector of real parameters,
$T_1,T_2,\ldots,T_k$ are real-valued functions on $\mg_n$, and
$\psi(\beta)$ is a normalizing constant. Usually, $T_i$ are taken to
be counts of various subgraphs, for example, $T_1(G)=\#$ edges in $G$,
$T_2(G)=\#$ triangles in $G,\ldots\,$. The main results of
Section \ref{sec3} work for more general ``continuous functions'' on graph
space, such as the degree sequence or the eigenvalues of the adjacency
matrix. This allows models with sufficient statistics of the form
$\sum_{i=1}^n\beta_id_i(G)$ with $d_i(G)$ the degree of vertex
$i$. See, for example, \cite{pd177}.

These exponential models were used by Holland and Leinhardt \cite
{holland} in the directed case. Frank and Strauss \cite{frank}
developed them, showing that if $T_i$ are chosen as edges, triangles,
and stars of various sizes, the resulting random graph edges form a
Markov random field. A general development is in
Wasserman and Faust~\cite{wasserman}. Newer developments, consisting
mainly of new sufficient statistics and new ranges for parameters that
give interesting and practically relevant structures, are
summarized in Snijders et al. \cite{snijders06}. Finally,
Rinaldo et al. \cite{rinaldo} develop the geometric theory for
this class of models with extensive further references.

A major problem in this field is the evaluation of the constant $\psi
(\beta)$ which is crucial for carrying out maximum
likelihood and Bayesian inference. As far as we know, there is no
feasible analytic method for approximating $\psi$ when $n$ is large.
Physicists have tried the technique of mean-field approximations; see
Park and Newman \cite{park04,park05} for the case where $T_1$ is
the number of edges and $T_2$ is the number of two-stars 
or the number of triangles. Mean-field
approximations have no rigorous foundation, however, and are known to
be unreliable in related models such as spin glasses \cite
{talagrand03}. For exponential graph models,
Chatterjee and Dey \cite{chatterjeedey10} prove that they
work for some restricted ranges of $\{\beta_i\}$: values where the
graphs are shown to be essentially Erd\H{o}s--R\'enyi graphs (see
Theorem \ref{specialbehave} below
and \cite{bhamidi}).

A host of techniques for approximating the normalizing constant using
various Monte Carlo schemes have been proposed. 
These include the MCMLE procedure of
Geyer and Thompson \cite{gt92}. 
The bridge sampling approach
of Gelman and Meng \cite{gelman} also builds on techniques
suggested by physicists to estimate free energy [$\psi(\beta)$ in our
context]. The equi-energy sampler of Kou et al. \cite{kou} can
also be harnessed to estimate $\psi$.

Alas, at present writing these procedures seem useful only for
relatively small graphs. For bigger graphs, the run-time of the
Monte Carlo algorithms become unpleasantly long.
Snijders \cite{snijders09} and Handcock \cite{handcock} demonstrate
this empirically with further discussion in \cite{snijders06}. One
theoretical explanation for the poor performance of these techniques
comes from the work of Bhamidi et al. \cite{bhamidi}. Most of
the algorithms above require a sample from the model (\ref{21}). This
is most often done by using a local Markov chain based on adding or
deleting edges via Metropolis or Glauber dynamics (Gibbs sampling).
These authors
show that if the parameters are nonnegative, then for large $n$,
\begin{itemize}
\item either the $p_\beta$ model is essentially the same as an Erd\H
{o}s--R\'enyi
model (in which case the Markov chain mixes in $n^2\log n$
steps);
\item or the Markov chain takes exponential time to mix.
\end{itemize}
Thus, in cases where the model is not essentially trivial, the Markov
chains required to carry MCMLE procedures cannot be usefully run to
stationarity.

Two other approaches to estimation are worth mentioning. The
pseudo-likelihood approach of Besag \cite{besag75} is widely used
because of its
ease of implementation. Its properties are at best poorly understood:
it does not directly \mbox{maximize} the likelihood and in empirical
comparisions (see, e.g., \cite{corander}), has appreciably larger
variability than the MLE. Comets and Jan\v{z}ura \cite{cj98} prove
consistency and asymptotic normality of the maximum pseudo-likelihood
estimator in certain Markov random field models. Chatterjee \cite
{chatterjee07} shows that it is consistent for estimating the
temperature parameter of the Sherrington--Kirkpatrick model of spin
glasses. The second approach
is Snijders' \cite{snijders09} suggestion to use the Robbins--Monro
optimization procedure \cite{rm51} to compute solutions to the moment equations
$E_\beta(T(G))=T(G^*)$ where $G^*$ is the observed graph. While
promising, the approach requires generating points from $p_\beta$ for
arbitrary $\beta$. The only way to do this at present is by MCMC and
the results of \cite{bhamidi} suggest this may be impractical.

\subsection{Graph limits}\label{sec2c}

In a sequence of papers \cite
{borgsetal06,borgsetal08,borgsetal07,freedmanlovaszschrijver07,lovasz06,lovasz07,lovaszsos08,lovaszszegedy06,lovaszszegedy07,lovaszszegedy07b,lovaszszegedy09},
Laszlo Lov\'asz and coauthors V. T. S\'os, B.
Szegedy, C. Borgs, J. Chayes, K. Vesztergombi, A.~Schrijver, and
M. Freedman have developed a beautiful, unifying theory of graph
limits. (See also the related work of Austin \cite{austin08} and
Diaconis and Janson \cite{pd175} which traces this back to work of
Aldous \cite{aldous81}, Hoover \cite{hoover82} and Kallenberg \cite
{kallenberg05}.)
This body of work sheds light on various graph-theoretic topics such as
graph homomorphisms, Szemer\'edi's
regularity lemma, quasi-random graphs, graph testing and extremal
graph theory, and has even found applications in statistics and
related areas (see, e.g., \cite{pd177}). Their theory has been
developed for dense graphs (number of edges comparable to the square
of number of vertices) but parallel theories for sparse graphs are
beginning to emerge \cite{bollobasriordan09}. 

Lov\'asz and coauthors define the limit of a sequence of dense graphs
as follows. We quote the definition verbatim from
\cite{lovaszszegedy06} (see also
\cite{borgsetal08,borgsetal07,pd175}). Let $G_n$ be a sequence of
simple graphs whose number of nodes tends to infinity. For every fixed
simple graph $H$, let $|{\hom(H, G)}|$ denote the number of
homomorphisms of $H$ into $G$ [i.e., edge-preserving maps $V(H) \to
V(G)$, where $V(H)$ and $V(G)$ are the vertex sets]. This number is
normalized to get the homomorphism density
%
%
\begin{equation}
\label{homdens}
t(H,G):= \frac{|{\hom(H, G)}|}{|V(G)|^{|V(H)|}}.
\end{equation}
This gives the probability that a random mapping $V(H) \to V(G)$ is a
homomorphism.

Note that $|{\hom(H,G)}|$ is not the count of the number of copies of
$H$ in $G$, but is a constant multiple of that if $H$ is a complete
graph. For example, if $H$ is a
triangle, $|{\hom(H,G)}|$ is the number of triangles in $G$ multiplied
by six. On the other hand if $H$ is, say, a $2$-star (i.e., a triangle
with one edge missing) and $G$ is a triangle, then the number of copies
of $H$ in $G$ is zero, while $|{\hom(H, G)}| = 3^3 = 27$.

Suppose that the graphs $G_n$ become more and more similar in the
sense that $t(H, G_n)$ tends to a limit $t(H)$ for every $H$. One way
to define a limit of the sequence $\{G_n\}$ is to define an
appropriate limit object from which the values $t(H)$ can be read off.

The main result of \cite{lovaszszegedy06} (following the earlier
equivalent work of Aldous \cite{aldous81} and Hoover \cite{hoover82})
is that indeed there is a natural ``limit object'' in the form of a
function $h\in\mm$, where $\mm$ is the space of all measurable
functions from $[0,1]^2$ into $[0,1]$ that satisfy $h(x,y)=h(y,x)$ for
all $x,y$.

Conversely, every such function arises as the limit of an appropriate
graph sequence. This limit object determines all the limits of
subgraph densities: if $H$ is a simple graph with $V(H) = [k] = \{
1,\ldots, k\}$, let
%
%
\begin{equation}
\label{tfdef} t(H,h) = \int_{[0,1]^k}\prod
_{\{i,j\}\in E(H)} h(x_i, x_j) \,dx_1
\cdots dx_k.
\end{equation}
Here $E(H)$ denotes the edge set of $H$. A sequence of graphs
$\{G_n\}_{n\ge1}$ is said to converge to $h$ if for every finite
simple graph $H$,
%
%
\begin{equation}
\label{gconv} \lim_{n\to\infty} t(H, G_n) = t(H,h).
\end{equation}
Intuitively, the interval $[0,1]$ represents a ``continuum'' of
vertices, and $h(x,y)$ denotes the probability of putting an edge
between $x$ and $y$. For example, for the Erd\H{o}s--R\'enyi graph
$G(n,p)$, if $p$ is fixed and $n \to\infty$, then the limit graph is
represented by the function that is identically equal to $p$
on $[0,1]^2$. Clearly, this framework is therefore useful only when
$p$ does not tend to zero when $n\ra\infty$, that is, in the case of
dense Erd\H{o}s--R\'enyi graphs.

These limit objects, that is, elements of $\mm$, are called ``graph
limits'' or ``graphons'' in \cite
{lovaszszegedy06,borgsetal08,borgsetal07}. A finite
simple graph $G$ on $\{1,\ldots,n\}$ can also be represented as a
graph limit $f^G$ is a natural way, by defining
%
%
\begin{equation}
\label{wg} f^G(x,y) = \cases{ 1, &\quad if $\bigl(\lceil nx\rceil,
\lceil
ny\rceil\bigr)$ is an edge in $G$,
\cr
0, &\quad otherwise.}
\end{equation}
The definition makes sense because $t(H, f^G) = t(H, G)$ for every
simple graph $H$ and therefore the constant sequence $\{G, G,\ldots\}$
converges to the graph limit~$f^G$. Note that this allows \textit{all}
simple graphs, irrespective of the
number of vertices, to be represented as elements of a single abstract
space, namely $\mm$.

With the above representation, it turns out that the notion of
convergence in terms of subgraph densities outlined above can be
captured by an explicit metric on $\mm$, the so-called \textit{cut
distance} (originally defined for finite graphs by Frieze and Kannan
\cite{friezekannan99}). Start with the space $\mm$ of measurable
functions $f(x,y)$ on $[0,1]^2$ that satisfy $0\le f(x,y)\le1$ and
$f(x,y)=f(y,x)$. Define the cut distance
%
%
\begin{equation}
\label{defcut} d_\square(f,g):= \sup_{S,T\subseteq[0,1]} \biggl|\int
_{S\times T} \bigl[f(x,y)-g(x,y)\bigr] \,dx \,dy \biggr|.
\end{equation}
Introduce in $\mm$ an equivalence relation: let $\Sigma$ be the space
of measure preserving bijections $\sigma\dvtx[0,1]\to[0,1]$. Say that
$f(x,y)\sim g(x,y)$ if $f(x,y)=g_\sigma(x,y):=g(\sigma x, \sigma y)$
for some $\sigma\in\Sigma$. Denote by ${\widetilde g}$ the closure in
$(\mm, d_\Box)$ of the orbit $\{g_\sigma\}$. The quotient space is
denoted by $\mmm$ and $\tau$ denotes the natural map $g\to
{\widetilde
g}$. Since $d_\Box$ is invariant under $\sigma$ one can define on
$\mmm$, the natural distance $\delta_\Box$ by
\[
\delta_\Box({\widetilde f},{\widetilde g}):=\inf_\sigma
d_\Box(f, g_\sigma)=\inf_\sigma
d_\Box(f_\sigma, g)=\inf_{\sigma_1,\sigma_2}d_\Box(f_{\sigma_1},
g_{\sigma_2})
\]
making $(\mmm, \delta_\Box)$ into a metric space. To any\vspace*{2pt} finite graph
$G$, we associate $f^G$ as in (\ref{wg}) and its orbit ${\widetilde
G}=\tau f^G= {\widetilde f}^G\in\mmm$.

The papers by Lov\'asz and coauthors establish many important
properties of the metric space $\mmm$ and the associated notion of
graph limits. For example, $\mmm$ is compact. A pressing objective is
to understand what functions from $\mmm$ into $\rr$ are continuous.
Fortunately, it is an easy fact that the homomorphism density $t(H,
\cdot)$ is continuous for any finite simple graph $H$
\cite{borgsetal07,borgsetal08}. There are other, more complicated
functions that are continuous. For example, the degree distribution is
continuous with respect to this topology, as is the distribution of
eigenvalues. See \cite{austin10,austin08} for further discussions.

\subsection{Large deviations for random graphs}\label{sec2d}

Let $G(n,p)$ be the random graph on $n$ vertices where each edge is
added independently with probability $p$. This model has been the
subject of extensive investigations since the pioneering work of
Erd\H{o}s and R\'enyi \cite{erdosrenyi60}, yielding a large body of
literature (see \cite{bollobas01,JLR00} for partial surveys).

Recently, Chatterjee and Varadhan \cite{cv10} formulated a large
deviation principle for the Erd\H{o}s--R\'enyi graph, in the same way
as Sanov's
theorem \cite{sanov61} gives a large deviation principle for an
i.i.d. sample. The formulation and proof of this result makes
extensive use of the properties of the topology described in
Section \ref{sec2c}.\eject

Let $I_p\dvtx[0,1] \to\rr$ be the function
%
%
\begin{equation}
\label{ipdef1} I_p(u):= \frac{1}{2}u\log\frac{u}{p}+
\frac{1}{2}(1-u)\log\frac
{1-u} {1-p}.
\end{equation}
The domain of the function $I_p$ can be extended to $\mm$ as
%
%
\begin{equation}\label{ipdef2}
I_p(h):=\int_0^1\int
_0^1 I_p\bigl(h(x,y)\bigr)\,dx\,dy.
\end{equation}
The function $I_p$ can be defined on $\mmm$ by declaring
$I_p(\widetilde{h}):= I_p(h)$ where $h$ is any representative element
of the equivalence class $\widetilde{h}$. Of course, this raises the
question whether $I_p$ is well defined on $\mmm$. It was proved in
\cite{cv10} that the function $I_p$ is indeed well defined on $\mmm$
and is lower semicontinuous under the cut metric~$\delta_\Box$.

The random graph $G(n,p)$ induces probability distributions
$\pp_{n,p}$ on the space\vspace*{1pt} $\mm$ through the map $G\to f^G$ and
${\widetilde\pp}_{n,p}$ on $\mmm$ through the\vspace*{2pt} map $G\to f^G\to
{\widetilde f}^G= \widetilde{G}$. The large deviation principle for
${\widetilde\pp}_{n,p}$ on $(\mmm, \delta_\Box)$ is the main
result of
\cite{cv10}.
%
%
\begin{theorem}[(Chatterjee and Varadhan \cite{cv10})]\label{cvmain}
For each fixed $p\in(0,1)$, the sequence ${\widetilde\pp}_{n,p}$
obeys a large deviation principle in the space $\mmm$ (equipped with
the cut metric) with rate function $I_p$ defined by (\ref{ipdef2}).
Explicitly, this means that for any closed set $\widetilde{F}
\subseteq\mmm$,
%
%
\begin{equation}
\label{closed} \limsup_{n\to\infty} \frac{1}{n^2}\log{\widetilde
\pp}_{n,p}(\widetilde{F}) \le-\inf_{{\widetilde h}\in\widetilde{F}}
I_p({\widetilde h})
\end{equation}
and for any open set $\widetilde{U}\subseteq\mmm$,
%
%
\begin{equation}
\label{open} \liminf_{n\to\infty} \frac{1}{n^2}\log{\widetilde
\pp}_{n,p}( \widetilde{U}) \ge-\inf_{{\widetilde h}\in\widetilde{U}}
I_p({\widetilde h}).
\end{equation}
\end{theorem}

\section{The main result}\label{sec3}

Let $T\dvtx\mmm\to\rr$ be a bounded continuous function on the metric
space $(\mmm,\delta_\Box)$. Fix $n$ and let $\mg_n$ denote the set of
simple graphs on $n$ vertices. Then $T$ induces a probability mass
function $p_n$ on $\mg_n$ defined as
\[
p_n(G):=e^{n^2 (T(\widetilde{G})-\psi_n)}.
\]
Here $\widetilde{G}$ is the image of $G$ in the quotient space $\mmm$ as
defined in Section \ref{sec2c} and $\psi_n$ is a constant such that
the total
mass of $p_n$ is $1$. Explicitly,
%
%
\begin{equation}
\label{psidef} \psi_n=\frac1{n^2}\log\sum
_{G\in\mg_n}e^{n^2 T(\widetilde{G})}.
\end{equation}
The coefficient $n^2$ is meant to ensure that $\psi_n$ tends to a
nontrivial limit as \mbox{$n\to\infty$}. (Note that $T$ does not vary with
$n$.)\vadjust{\goodbreak} To describe this limit, define a
function $I\dvtx[0,1]\to\rr$ as
\[
I(u):=\tfrac12 u\log u+\tfrac12(1-u)\log(1-u)
\]
and extend $I$ to $\mmm$ in the usual manner:
%
%
\begin{equation}
\label{idef} I(\widetilde{h})=\iint_{[0,1]^2}I\bigl(h(x,y)\bigr) \,dx\,dy,
\end{equation}
where $h$ is a representative element of the equivalence class
$\widetilde{h}$. As mentioned before, it follows from a result of
\cite{cv10} that $I$ is well defined and lower semi-continuous on
$\mmm$. The following theorem is the first main result of this paper.
%
%
\begin{theorem}\label{soln}
If $T\dvtx\mmm\to\rr$ is a bounded continuous function and $\psi_n$ and
$I$ are defined as above, then
\[
\psi:=\lim_{n\to\infty} \psi_n = \sup
_{\widetilde{h}\in\mmm} \bigl(T(\widetilde{h}) - I(\widetilde{h})\bigr).
\]
\end{theorem}
We will see later that the supremum on the right-hand side is actually
a maximum, that is, there is some $\widetilde{h}$ where the supremum
is attained. This is significant because such maximizing $\widetilde
{h}$'s describe the structure of the random graph in the large $n$ limit.

As mentioned in the \hyperref[sec1]{Introduction}, evaluation of the
normalizing constant is one of the key problems in statistical
applications of exponential random graphs. Incidentally, even the
existence of the limit in Theorem \ref{soln} has an important
consequence. Suppose that a computer program can evaluate the exact
value of the normalizing constant for moderate sized $n$. Then if $n$
is large, one can choose a ``scaled down'' model with a smaller number
of nodes, and use the exact value of the normalizing constant in the
scaled down model as an approximation to the normalizing constant in
the larger model.

Theorem \ref{soln} gives an asymptotic formula for $\psi_n$. However,
it says nothing about the behavior of a random graph drawn from the
exponential random graph model. Some aspects of this behavior can be
described as follows. Let $\tilf^*$ be the subset of $\mmm$ where
$T(\tilh)-I(\tilh)$ is maximized. By the compactness of $\mmm$, the
continuity of $T$ and the lower semi-continuity of $I$, $\tilf^*$ is a
nonempty compact set. Let $G_n$ be a random graph on $n$ vertices
drawn from the exponential random graph model defined by $T$. The
following theorem shows that for $n$ large, $\widetilde{G}_n$ must lie
close to $\tilf^*$ with high probability. In particular, if $\tilf^*$
is a
singleton set, then the theorem gives a weak law of large numbers for
$G_n$.
%
%
\begin{theorem}\label{limitbehave}
Let $\tilf^*$ and $G_n$ be defined as in the above paragraph. Let $\pp
$ denote the probability measure on the underlying probability space on
which $G_n$ is defined. Then for\vadjust{\goodbreak}
any $\eta> 0$ there exist $C, \gamma>0$ such that for any $n$,
\[
\pp\bigl(\delta_\Box\bigl(\widetilde{G}_n, \tilf^*\bigr)
> \eta\bigr) \le Ce^{-n^2
\gamma}.
\]
\end{theorem}

\section{An application}\label{sec4}

Let $H_1,\ldots, H_k$ be finite simple graphs, where $H_1$ is the
complete graph on two vertices (i.e., just a single edge), and each
$H_i$ contains at least one edge. Let $\beta_1,\ldots,\beta_k$ be $k$
real numbers. For any $h\in\mm$, let
%
%
\begin{equation}
\label{tdef1} T(h):= \sum_{i=1}^k
\beta_i t(H_i, h),
\end{equation}
where $t(H_i, h)$ is the homomorphism density of $H_i$ in $h$, defined
in (\ref{tfdef}). Note that there is nothing special about taking
$H_1$ to be a single edge; if we do not want $H_1$ in our sufficient
statistic, we just take $\beta_1=0$; all theorems would remain valid.

As remarked in Section \ref{sec2c}, $T$ is
continuous with respect to the cut distance on~$\mm$, and hence admits
a natural definition on $\mmm$. Note that for any finite simple graph
$G$ that has at least as many nodes as the largest of the~$H_i$'s,
\[
T(\widetilde{G}) = \sum_{i=1}^k
\beta_i t(H_i, G).
\]
For example, if $k=2$, and $H_2$ is a triangle, and $G$ has at least
$3$ nodes, then
\[
T(\widetilde{G}) = \frac{2\beta_1 (\#\mbox{edges in } G)}{n^2} + \frac
{6\beta_2 (\#\mbox{triangles in } G)}{n^3}.
\]
Let $\psi_n$ be as in (\ref{psidef}), and let $G_n$ be the $n$-vertex
exponential random graph with sufficient statistic $T$. Theorem \ref{soln}
gives a formula for $\lim_{n\to\infty} \psi_n$ as the
solution of a variational problem. Surprisingly the variational
problem is explicitly solvable if $\beta_2,\ldots,\beta_k$ are
nonnegative.
%
%
\begin{theorem}\label{specialnorm}
Let $T$, $\psi_n$ and $H_1,\ldots, H_k$ be as above. Suppose
$\beta_2,\ldots,\beta_k$ are nonnegative. Then
%
%
\begin{equation}
\label{scalar} \lim_{n\to\infty} \psi_n = \sup
_{0\le u\le1} \Biggl(\sum_{i=1}^k
\beta_i u^{e(H_i)} - I(u) \Biggr),
\end{equation}
where $I(u)=\frac12 u\log u+\frac12(1-u)\log(1-u)$ and $e(H_i)$ is the
number of edges in~$H_i$. Moreover, each solution of the variational
problem of Theorem \ref{soln} for this $T$ is a constant function,
where the constant solves the scalar maximization
problem~(\ref{scalar}).
\end{theorem}
Theorem \ref{specialnorm} gives the limiting value of $\psi_n$ if
$\beta_2,\ldots,\beta_k$ are nonnegative. The next theorem describes
the behavior of the exponential random graph $G_n$ under this
condition if $n$ is large.\eject
%
%
\begin{theorem}\label{specialbehave}
For each $n$, let $G_n$ be an $n$-vertex exponential random graph
with sufficient statistic $T$ defined in (\ref{tdef1}). Assume that
$\beta_2,\ldots,\beta_k$ are nonnegative. Then:
\begin{longlist}[(a)]
\item[(a)] If the maximization problem in (\ref{scalar}) is solved at a
unique value $u^*$, then $G_n$ is indistinguishable from the Erd\H
{o}s--R\'enyi
graph $G(n,u^*)$ in the large $n$ limit, in the sense that
$\widetilde{G}_n$ converges to the constant function $u^*$ in
probability as $n\to\infty$.
\item[(b)] Even if the maximizer is not unique, the set $U$ of maximizers
is a finite subset of $[0,1]$ and
\[
\min_{u\in U}\delta_{\Box}(\widetilde{G}_n,
\widetilde{u}) \to0 \qquad\mbox{in probability as $n\to\infty$},
\]
where $\widetilde{u}$ denotes the image of the constant function $u$
in $\mmm$. In other words, $G_n$ behaves like an Erd\H{o}s--R\'enyi
graph $G(n,u)$ where $u$ is picked randomly from some probability
distribution on $U$.
\end{longlist}
\end{theorem}
It may be noted here that the conclusion of Theorem \ref{specialbehave} was
proved earlier by Bhamidi et al. \cite{bhamidi} under certain
restrictions on the parameters that they called a ``high temperature
condition.'' This is in analogy with spin systems, since random graphs
may be interpreted as systems of particles (corresponding to edges)
each having spin 0 or 1 (i.e., closed or open). With this
interpretation, it is straightforward to check that when $\beta
_2,\ldots, \beta_k$ are nonnegative, the model defined above
satisfies the so-called FKG property \cite{fkg71}. Stated simply, the
FKG property means that if $f$ and $g$ are monotone functions of the
random graph (i.e., functions whose values cannot decrease if more
edges are added to the graph), then $f$ and $g$ are positively
correlated random variables. The FKG property has important
consequences; for instance, it implies that the expected value of
$t(H_i, G)$ is an increasing function of $\beta_j$ for any $i$ and
$j$. We will see some further consequences of the FKG property in our
proof of Theorem \ref{degen} in the next section.

\section{Phase transitions and near-degeneracy}\label{sec5}
To illustrate the results of the previous section, recall the
exponential random graph model (\ref{trianglemodel}) with edges and
triangles as sufficient statistics:
%
%
\begin{eqnarray}
\label{ttriangle} T(\widetilde{G}) &=& 2\beta_1 \frac{\#\mathrm{edges}\
\mathrm{in}\
G}{n^2} + 6
\beta_2 \frac{\#\mathrm{triangles}\ \mathrm{in}\ G}{n^3}
\nonumber\\[-8pt]\\[-8pt]
&=& \beta_1 t(H_1, G) + \beta_2
t(H_2, G),
\nonumber
\end{eqnarray}
where $H_1$ is a single edge and $H_2$ is a triangle. Fix $\beta_1$
and $\beta_2$ and let
%
%
\begin{equation}
\label{lu} \ell(u):= \beta_1 u + \beta_2 u^3
- I(u),
\end{equation}
where $I(u)=\frac12 u\log u+\frac12(1-u)\log(1-u)$, as usual. Let $U$
be the set of maximizers of $\ell(u)$ in $[0,1]$. Theorem \ref{specialbehave}
describes the limiting behavior of $G_n$ in terms
of the set $U$. In particular, if $U$ consists of a single point $u^*
= u^*(\beta_1,\beta_2)$, then $G_n$ behaves like the Erd\H{o}s--R\'enyi graph
$G(n,u^*)$ when $n$ is large.

%
%
\begin{figure}

\includegraphics{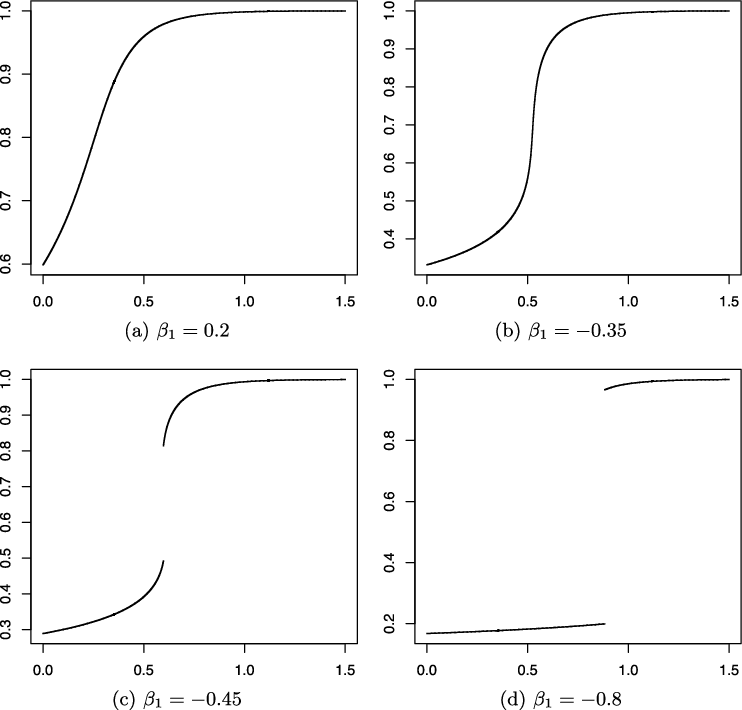}

\caption{Plot of $u^*(\beta_1,\beta_2)$ on $y$-axis vs $\beta_2$ on
$x$-axis for different fixed values of $\beta_1$. Part \textup{(c)}
demonstrates a phase transition. Part \textup{(d)} demonstrates
near-degeneracy.}
\label{pstarfig}
\end{figure}

It is likely that $u^*(\beta_1,\beta_2)$ does not have a closed form
expression, other than when $\beta_2 = 0$, in which case
\[
u^*(\beta_1,0) = \frac{e^{2\beta_1}}{1+e^{2\beta_1}}.
\]
It is, however, quite easy to numerically approximate
$u^*(\beta_1,\beta_2)$. Figure~\ref{pstarfig} plots $u^*(\beta_1,\beta_2)$
versus $\beta_2$ for four different fixed values of $\beta_1$, namely,
$\beta_1=0.2$, $-0.35,-0.45$, and $-0.8$. The figures show that $u^*$ is
a continuous function of $\beta_2$ as long as $\beta_1$ is not too far
down the negative axis.

But for $\beta_1$ below a threshold (e.g., when $\beta_1=-0.45$),
$u^*$ shows a single jump discontinuity in $\beta_2$, signifying a
phase transition. In physical terms, this is a first order phase
transition, by the following logic. By Theorem~\ref{specialbehave}, our random
graph behaves like $G(n, u^*)$ when $n$ is large. On the other hand, by
a standard computation the expect number of triangles is the first
derivative of the free energy $\psi_n$ with respect to $\beta_2$.
Therefore in the large $n$ limit, a discontinuity in $u^*$ as a
function of $\beta_2$ signifies a discontinuity in the derivative of
the limiting free energy, which is the physical definition of a first
order phase transition.

At the point of discontinuity, $\ell(u)$ is
maximized at two values of $u$, that is, the set $U$ consists of two
points. Lastly, as $\beta_1$ goes down the negative axis, the model
starts to exhibit ``near-degeneracy'' in the sense of Handcock
\cite{handcock} (see also \cite{park04}) as seen in the
last frame of Figure \ref{pstarfig}. This means that when $\beta_1$ is a
large negative number, then as $\beta_2$ varies, the
model transitions from being a very sparse graph for low values of
$\beta_2$, to a very dense graph for large values of
$\beta_2$, completely skipping all intermediate structures. If this
sentence is confusing, please see Theorem \ref{degen} below for a precise
statement. This theorem gives a simple mathematical description of this
phenomenon and hence the first rigorous proof of the degeneracy
observed in exponential graph models. Related results are in H\"
aggstrom and Jonasson \cite{hj99}.

%
%
\begin{theorem}\label{degen}
Let $G_n$ be an exponential random graph with sufficient statistic $T$
defined in (\ref{ttriangle}) and let $\pp$ be the probability measure
on the underlying probability space on which $G_n$ is defined. Fix any
$\beta_1 < 0$. Let
\[
c_1:= \frac{e^{\beta_1}}{1+e^{\beta_1}},\qquad c_2:= 1+
\frac
{1}{2\beta_1}.
\]
Suppose $|\beta_1|$ is so large that $c_1 < c_2$. Let $e(G_n)$ be the
number of edges in $G_n$ and let $f(G_n):= e(G_n)/{n\choose2}$ be the
edge density. [Note that $f(G_n) = \frac{n}{n-1} t(H_1, G_n)$, where
$H_1$ is a single edge.]

Then there exists $q = q(\beta_1) \in[0,\infty)$ such that if
$-\infty< \beta_2 < q$, then
\[
\lim_{n\ra\infty} \pp\bigl(f(G_n)> c_1\bigr)
=0,
\]
and if $\beta_2 > q$, then
\[
\lim_{n\ra\infty} \pp\bigl(f(G_n)< c_2\bigr)
=0.
\]
In other words, if $\beta_1$ is a large negative number, then $G_n$ is
either sparse (if $\beta_2 < q$) or nearly complete (if $\beta_2> q$).
\end{theorem}

The difference in the values of $c_1$ and $c_2$ can be quite striking
even for relatively small values of $\beta_1$.\vadjust{\goodbreak} For example, $\beta_1
=-5$ gives $c_1 \simeq0.007$ and $c_2 = 0.9$. Significant extensions
of Theorem \ref{degen} have been made in the recent manuscripts \cite
{aristoffradin,radin1,radin2,yin}.

\section{The symmetric phase, symmetry breaking, and the
Euler--Lagrange equations}\label{sec6}
The purpose of this section is to extend the analysis of the model from
Section \ref{sec4} beyond the case of nonnegative parameters. We begin
with a standard approach to solving variational problems.

\subsection{Euler--Lagrange equations}\label{sec62}

We return to the exponential random graph model with sufficient
statistic $T$ defined in (\ref{tdef1}) in terms of the densities of
$k$ fixed graphs $H_1,\ldots, H_k$, where $H_1$ is a single edge.
Theorems \ref{specialnorm} and \ref{specialbehave} analyze this model
when $\beta_2,\ldots,\beta_k$ are nonnegative. What if they are not?
One can still try to derive the Euler--Lagrange equations (or\vspace*{1pt} Euler's
equation; see \cite{gelfand}) for the
related variational problem of maximizing $T(\tilh)-I(\tilh)$. The
following theorem presents the outcome of this effort.

For a finite simple graph $H$, let $V(H)$ and $E(H)$ denote the sets
of vertices and edges of $H$. Given a symmetric measurable function
$h\dvtx[0,1]^2\to\rr$, for each $(r,s)\in E(H)$ and each pair of points
$x_r, x_s\in[0,1]$, define
\[
\Delta_{H,r,s}h(x_r,x_s):=\int
_{[0,1]^{|V(H)\setminus\{r,s\}|}} \mathop{\prod_{(r',s')\in
E(H)}}_{(r',s')\neq(r,s)}h(x_{r'},x_{s'})
\mathop{\prod_{v\in V(H)}}_{v\neq r,s}
\,dx_v.
\]
For $x,y\in[0,1]$ define
%
%
\begin{equation}
\label{deltahi} \Delta_{H} h(x,y):= \sum
_{(r,s)\in E(H)} \Delta_{H, r,s}h (x,y).
\end{equation}
For example, when $H$ is a triangle, then $V(H)=\{1,2,3\}$ and
\[
\Delta_{H, 1,2}h(x,y) = \Delta_{H, 1,3}h(x,y) =
\Delta_{H,
2,3}h(x,y) = \int_0^1 h(x,z)
h(y,z)\,dz
\]
and therefore $\Delta_{H} h(x,y) = 3\int_0^1 h(x,z) h(y,z) \,dz$. When
$H$ contains exactly one edge, define $\Delta_H h \equiv1$ for any
$h$, by the usual convention that the empty product is~$1$. The
following theorem gives the Euler--Lagrange equations for the optimizer
$h$ of Theorem \ref{soln} in terms of these $\Delta_Hh$'s.
%
%
\begin{theorem}\label{euler}
Let\vspace*{1pt} $T\dvtx\mmm\to\rr$ be defined as in (\ref{tdef1}) and the operator
$\Delta_H$ be defined as in (\ref{deltahi}). If $\tilh\in\mmm$
maximizes $T(\tilh)-I(\tilh)$, then any representative element $h\in
\tilh$ must satisfy for almost all $(x,y)\in[0,1]^2$,
\[
h(x,y) = \frac{e^{2\sum_{i=1}^k \beta_i \Delta_{H_i} h(x,y)}}{1 +
e^{2\sum_{i=1}^k \beta_i \Delta_{H_i} h(x,y)}}.
\]
Moreover, any maximizing function must be bounded away from $0$ and
$1$.
\end{theorem}\eject
Unfortunately, these equations may have many solutions and therefore do
not uniquely identify the optimizer. The next subsection gives a
sufficient condition under which the solution in unique.

\subsection{The replica symmetric phase}
Borrowing terminology from spin\break glasses, we define the \textit{replica
symmetric phase} or simply the \textit{symmetric phase} of a
variational problem like maximizing $T(h)-I(h)$ as the set of
parameter values for which all the maximizers are constant functions.
When the parameters are such that all maximizers are nonconstant
functions we say that the parameter vector is in the region of broken
replica symmetry, or simply broken symmetry. There may be another
situation, where some optimizers are constant while others are
nonconstant, although we do not know of such examples. (This third
region may be called a region of partial symmetry.)

Statistically, the exponential random graph behaves like an Erd\H
{o}s--R\'enyi graph
in the symmetric region of the parameter space, while such behavior
breaks down in the region of broken symmetry. This follows easily from
Theorem \ref{limitbehave}.

Theorem \ref{specialbehave} shows that for the sufficient statistic
$T$ defined in (\ref{tdef1}), each
$(\beta_1,\beta_2,\ldots,\beta_k)$ in $ \rr\times\rr_+^{k-1}$
falls in
the replica symmetric region. Does symmetry hold only when
$\beta_2,\ldots,\beta_k$ are nonnegative? The following theorem
(proven with the aid of the Euler--Lagrange equations of Theorem \ref{euler}),
shows that this is not the case;
$(\beta_1,\ldots,\beta_k)$ is in the replica symmetric region whenever
$|\beta_2|,\ldots,|\beta_k|$ are small enough. Of course, this does not
supersede Theorem \ref{specialbehave} since it does not cover large
positive values of $\beta_2,\ldots,\beta_k$. However, it proves replica
symmetry for small negative values of $\beta_2,\ldots,\beta_k$, which
is not covered by Theorem \ref{specialbehave}.
%
%
\begin{theorem}\label{contraction}
Consider the exponential random graph with sufficient statistic $T$
defined in (\ref{tdef1}). Suppose $\beta_1,\ldots, \beta_k$ are such
that
\[
\sum_{i=2}^k|\beta_i|
e(H_i) \bigl(e(H_i)-1\bigr) < 2,
\]
where $e(H_i)$ is the number of edges in $H_i$. Then the conclusions
of Theorems \ref{specialnorm} and \ref{specialbehave} hold true for
this value of the parameter vector $(\beta_1,\ldots,\beta_k)$.
\end{theorem}

\subsection{Symmetry breaking}\label{sec61}
Theorems \ref{specialbehave} and \ref{contraction} establish various
regions of symmetry in the exponential random graph model with
sufficient statistic $T$ defined in (\ref{tdef1}). That leaves the
question: is there a region where symmetry breaks? We specialize to
the simple case where $k=2$ and $H_2$ is a triangle, that is, the example
of Section \ref{sec5}. In this case, it turns out that replica
symmetry breaks
whenever $\beta_2$ is less than a sufficiently large negative number
depending on $\beta_1$.
%
%
\begin{theorem}\label{symmbreaking}
Consider the exponential random graph with sufficient statistic $T$
defined in (\ref{ttriangle}). Then for any given value of $\beta_1$,
there is a positive constant $C(\beta_1)$ sufficiently large so that
whenever $\beta_2 < -C(\beta_1)$, $T(h)-I(h)$ is not maximized at
any constant function. Consequently, if $G_n$ is an $n$-vertex
exponential random graph with this sufficient statistic, then there
exists $\ep> 0$ such that
\[
\lim_{n\to\infty}\pp\bigl(\delta_\Box(\widetilde
{G}_n,\widetilde{C})>\ep\bigr)=1,
\]
where $\widetilde{C}$ is the set of constant functions. In other
words, $G_n$ does not look like an Erd\H{o}s--R\'enyi graph in the
large $n$ limit.
\end{theorem}
For interesting recent developments about symmetry breaking in
exponential random graph models, see Lubetzky and Zhao \cite{lz}.

\subsection{A completely solvable case}
A $j$-star is an undirected graph with one ``root'' vertex and $j$ other
vertices connected to the root vertex, with no edges between any of
these $j$ vertices. Let $H_j$ be a $j$-star for $j=1,\ldots, k$. Let
$T$ be the sufficient statistic
%
%
\begin{equation}
\label{pstarstat} T(G) = \sum_{j=1}^k
\beta_j t(H_j, G).
\end{equation}
%
Theorems \ref{specialnorm} and \ref{specialbehave} describe the
behavior of
this model when $\beta_2,\ldots, \beta_k$ are all nonnegative. The
following theorem completely solves this model for all values of $\beta
_2,\ldots, \beta_k$. The proof of this theorem was suggested by the
anonymous referee, improving upon the version of the result given in an
earlier draft. 
%
%
\begin{theorem}\label{pstarcase}
For the sufficient statistic $T$ defined in (\ref{pstarstat}), the
conclusions of Theorems \ref{specialnorm} and \ref{specialbehave} hold
for any
$\beta_1,\ldots, \beta_k\in\rr$.
\end{theorem}


\section{Extremal behavior}\label{sec7}
In the sections above, we have been assuming that
$\beta_2,\ldots,\beta_k$ are positive or barely negative. In this
section, we investigate what happens when $k=2$ and $\beta_2$ is large
and negative. The limits are describable but far from
Erd\H{o}s--R\'enyi. Our work here is inspired by related results of
Sukhada Fadvanis who has a different argument (using Tur\'an's theorem
\cite{turan}) for the case of triangles.

Suppose $H$ is any finite simple graph containing at least one edge.
Let $T$ be the sufficient statistic
\[
T(\widetilde{G}) = 2\beta_1\frac{\#\mathrm{edges}\ \mathrm{in}\
G}{n^2} + \beta_{2}
t(H, G).
\]
%
Let $G_n$ be the exponential random graph on $n$ vertices with this
sufficient statistic and let $\psi_n$ be the associated normalizing
constant as defined in (\ref{psidef}). Then Theorem \ref{soln} gives
\[
\lim_{n\ra\infty} \psi_n = \sup_{h\in\mm}
\bigl(T(h)-I(h)\bigr) =:\psi,
\]
where $I$ is defined in (\ref{idef}).
We also know (by Theorem \ref{limitbehave}) that
\[
\delta_\Box\bigl(\widetilde{G}_n, \widetilde{F}^*\bigr)
\ra0 \qquad\mbox{in probability as $n\to\infty$,}
\]
where $\widetilde{F}^*$ is the subset of $\mmm$ where $T-I$ is
maximized. (Note that $\WF^*$ is a closed set since $T-I$ is an upper
semicontinuous map.)

We can compute $\WF^*$ and $\psi$ when $\beta_2$ is positive, or
negative with small magnitude. We are unable to carry out the explicit
computation in the case of large negative $\beta_2$, unless $H$ is a
convenient object like a $j$-star. However, a qualitative description
can still be given by analyzing the behavior of $\WF^*$ and $\psi$ as
$\beta_2\ra-\infty$. Fixing $\beta_1$, we consider these objects as
functions of $\beta_2$ and write $\WF^*(\beta_2)$, $\psi(\beta_2)$
and $T_{\beta_2}$ instead of $\WF^*$, $\psi$ and $T$. Recall that
the chromatic number of a graph is the minimum number of colors
required to color the edges so that no two neighbors get the same color.
%
%
\begin{theorem}\label{erdos}
Fixing $H$ and $\beta_1$, let $\WF^*(\beta_2)$ and $\psi(\beta_2)$
be as above. Let $\chi(H)$ be the chromatic number of $H$, and define
%
%
\begin{equation}
\label{chrom} g(x,y):= \cases{ 1, &\quad if $\bigl[\bigl(\chi(H)-1\bigr
)x\bigr] \ne
\bigl[\bigl(\chi(H)-1\bigr)y\bigr]$,
\cr
0, &\quad otherwise,}
\end{equation}
where $[x]$ denotes the integer part of a real number $x$. Let $p =
e^{2\beta_1}/(1+e^{2\beta_1})$. Then
\[
\lim_{\beta_2\to-\infty} \sup_{\widetilde{f}\in\WF^*(\beta
_2)}\delta_\Box(
\widetilde{f}, p\widetilde{g}) = 0
\]
and
\[
\lim_{\beta_2\to-\infty} \psi(\beta_2)= \frac{(\chi
(H)-2)}{2(\chi(H)-1)}\log
\frac{1}{1-p}.
\]
\end{theorem}
Intuitively, the above result means that if $\beta_2$ is a large
negative number and $n$ is large, then an exponential random graph
$G_n$ with sufficient statistic $T$ looks roughly like a complete
$(\chi(H)-1)$-equipartite graph with $1-p$ fraction of edges randomly
deleted, where $p=e^{2\beta_1}/(1+e^{2\beta_1})$. In particular, if
$H$ is bipartite, then $G_n$ must be very sparse, since a
$1$-equipartite graph has no edges. Figure~\ref{bip} gives a simulation
result for the triangle model with large negative $\beta_2$.

%
%
\begin{figure}

\includegraphics{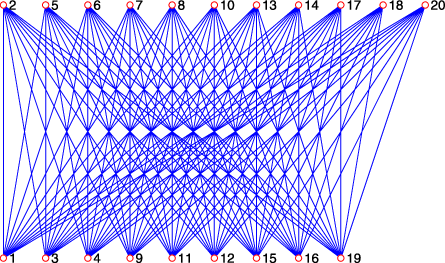}

\caption{A simulated realization of the exponential random graph model
on $20$ nodes with edges and triangles as sufficient statistics, where
$\beta_1=120$ and $\beta_2=-400$. (Picture by Sukhada Fadnavis. Gibbs
sampling used in simulations. We are unsure about the convergence of
the MCMC algorithm used to generate these grapgs, but the structure of
the simulated graphs match the predictions of Theorem \protect\ref{erdos}.)}
\label{bip}
\end{figure}

Theorem \ref{erdos} is closely related to the Erd\H{o}s--Stone theorem
\cite
{erdosstone} from extremal graph theory (or equivalently, Tur\'an's
theorem in the case of triangles as in the work of Fadnavis). Indeed,
it may be possible to prove some parts of our theorem using the Erd\H
{o}s--Stone theorem, but we prefer a bare-hands argument given in
Section \ref{secproof}. Due to this connection with extremal graph
theory, we refer to behavior of the graph in the ``large negative $\beta
_2$'' domain as \textit{extremal behavior}. 


\section{Proofs}\label{secproof}

\mbox{}

\begin{pf*}{Proof of Theorem \ref{soln}}
For each Borel set $\widetilde{A}\subseteq\mmm$ and each $n$,
define
\[
\widetilde{A}_n:= \{\tilh\in\widetilde{A}\dvtx \tilh= \widetilde{G}
\mbox{ for some } G\in\mg_n\}.
\]
Let $\pp_{n,p}$ be the Erd\H{o}s--R\'enyi measure defined in Section
\ref{sec2d}.
Note that $\widetilde{A}_n$ is a finite set and
\[
|\widetilde{A}_n| = 2^{n(n-1)/2}\pp_{n,1/2}(
\widetilde{A}_n) = 2^{n(n-1)/2} \pp_{n,1/2}(\widetilde{A}).
\]
Thus, if $\tilf$ is a closed subset of $\mmm$ then by Theorem \ref{cvmain}
%
%
\begin{eqnarray}\label{count1}
\limsup_{n\to\infty}\frac{\log|\tilf_n|}{n^2} &\le& \frac{\log
2}{2} - \inf
_{\tilh\in\tilf} I_{1/2}(\tilh)
\nonumber\\[-8pt]\\[-8pt]
&=& -\inf_{\tilh\in\tilf} I(\tilh).\nonumber
\end{eqnarray}
Similarly if $\widetilde{U}$ is an open subset of $\mmm$,
%
%
\begin{equation}
\label{count2} \liminf_{n\to\infty}\frac{\log|\widetilde{U}_n|}{n^2} \ge
-\inf
_{\tilh\in\widetilde{U}} I(\tilh).
\end{equation}
Fix $\ep> 0$. Since $T$ is a bounded function, there is a finite set
$R$ such that the intervals $\{(a, a+\ep)\dvtx a\in R\}$ cover the range
of $T$. For each $a\in R$, let $\tilf^a:= T^{-1}([a, a+\ep])$. By the
continuity of $T$, each $\tilf^a$ is closed. Now,
\[
e^{n^2\psi_n} \le\sum_{a\in R} e^{n^2(a+\ep)} \bigl|
\tilf^a_n\bigr|\le|R|\sup_{a\in R}
e^{n^2(a+\ep)} \bigl|\tilf^a_n\bigr|.
\]
By (\ref{count1}), this shows that
\[
\limsup_{n\to\infty} \psi_n \le\sup_{a\in R}
\Bigl(a + \ep-\inf_{\tilh\in\tilf^a} I(\tilh) \Bigr).
\]
Each $\tilh\in\tilf^a$ satisfies $T(\tilh) \ge a$. Consequently,
\[
\sup_{\tilh\in\tilf^a} \bigl(T(\tilh)- I(\tilh)\bigr) \ge\sup
_{\tilh\in
\tilf^a} \bigl(a-I(\tilh)\bigr) = a- \inf_{\tilh\in\tilf^a}
I(\tilh).
\]
Substituting this in the earlier display gives
%
%
\begin{eqnarray}\label{upbd}
\limsup_{n\to\infty} \psi_n &\le& \ep+ \sup
_{a\in R} \sup_{\tilh
\in\tilf^a} \bigl(T(\tilh)- I(\tilh)
\bigr)
\nonumber\\[-8pt]\\[-8pt]
&=& \ep+ \sup_{\tilh\in\mmm} \bigl(T(\tilh)-I(\tilh)\bigr).
\nonumber
\end{eqnarray}
For each $a\in R$, let $\tilu^a:= T^{-1}((a, a+\ep))$. By the continuity
of $T$, $\tilu^a$ is an open set. Note that
\[
e^{n^2 \psi_n} \ge\sup_{a\in R} e^{n^2 a} \bigl|
\tilu^a_n\bigr|.
\]
Therefore by (\ref{count2}), for each $a\in R$
\[
\liminf_{n\to\infty} \psi_n \ge a -\inf
_{\tilh\in\tilu^a} I(\tilh).
\]
Each $\tilh\in\tilu^a$ satisfies $T(\tilh)< a+ \ep$. Therefore,
\[
\sup_{\tilh\in\tilu^a} \bigl(T(\tilh)- I(\tilh)\bigr) \le\sup
_{\tilh\in
\tilu^a} \bigl(a+\ep-I(\tilh)\bigr) = a+\ep- \inf
_{\tilh\in\tilu^a} I(\tilh).
\]
Together with the previous display, this shows that
%
%
\begin{eqnarray}\label{lbd}
\liminf_{n\to\infty} \psi_n &\ge& -\ep+ \sup
_{a\in R} \sup_{\tilh
\in\tilu^a} \bigl(T(\tilh)- I(\tilh)
\bigr)
\nonumber\\[-8pt]\\[-8pt]
&=& -\ep+ \sup_{\tilh\in\mmm} \bigl(T(\tilh)-I(\tilh)\bigr).
\nonumber
\end{eqnarray}
Since $\ep$ is arbitrary in (\ref{upbd}) and (\ref{lbd}), this
completes the proof.
\end{pf*}

\begin{pf*}{Proof of Theorem \ref{limitbehave}}
Take any $\eta> 0$. Let
\[
\widetilde{A}:= \bigl\{\tilh\dvtx \delta_\Box\bigl(\tilh, \tilf^*\bigr)
\ge\eta\bigr\}.
\]
It is easy to see that $\widetilde{A}$ is a closed set. By
compactness of $\mmm$ and $\tilf^*$, and upper semi-continuity of $T-I$,
it follows that
\[
2\gamma:= \sup_{\tilh\in\mmm} \bigl(T(\tilh)-I(\tilh)\bigr) - \sup
_{\tilh
\in\widetilde{A}} \bigl(T(\tilh)-I(\tilh)\bigr) > 0.
\]
Choose $\ep= \gamma$ and define $\tilf^a$ and $R$ as in the proof of
Theorem \ref{soln}. Let $\widetilde{A}{}^a:= \widetilde{A}\cap\tilf^a$.
Then
\[
\pp(G_n\in\widetilde{A}) \le e^{-n^2\psi_n}\sum
_{a\in R} e^{n^2(a+\ep)}\bigl|\widetilde{A}{}^a_n\bigr|
\le e^{-n^2\psi_n} |R|\sup_{a\in
R}e^{n^2(a+\ep)}\bigl|
\widetilde{A}{}^a_n\bigr|.
\]
While bounding the last term above, it can be assumed without loss of
generality that $\widetilde{A}{}^a$ is nonempty for each $a\in R$, for
the other $a$'s can be dropped without upsetting the bound. By
(\ref{count1}) and Theorem \ref{soln} (noting that $\widetilde{A}{}^a$
is compact), the above display gives
\[
\limsup_{n\to\infty} \frac{\log\pp(G_n\in\widetilde{A})}{n^2} \le\sup
_{a\in R}
\Bigl(a + \ep-\inf_{\tilh\in\widetilde{A}{}^a} I(\tilh) \Bigr) - \sup
_{\tilh\in\mmm} \bigl(T(\tilh)-I(\tilh)\bigr).
\]
Each $\tilh\in\widetilde{A}{}^a$ satisfies $T(\tilh) \ge a$. Consequently,
\[
\sup_{\tilh\in\widetilde{A}{}^a} \bigl(T(\tilh)- I(\tilh)\bigr) \ge\sup
_{\tilh\in\widetilde{A}{}^a} \bigl(a-I(\tilh)\bigr) = a- \inf_{\tilh\in
\widetilde{A}{}^a}
I(\tilh).
\]
Substituting this in the earlier display gives
\begin{eqnarray*}
\limsup_{n\to\infty} \frac{\log\pp(G_n\in\widetilde{A})}{n^2} &\le& \ep+
\sup
_{a\in R} \sup_{\tilh\in\widetilde{A}{}^a} \bigl(T(\tilh)- I(\tilh)
\bigr) - \sup_{\tilh\in\mmm} \bigl(T(\tilh)-I(\tilh)\bigr)
\\
&=& \ep+ \sup_{\tilh\in\widetilde{A}} \bigl(T(\tilh)-I(\tilh)\bigr) -
\sup
_{\tilh\in\mmm} \bigl(T(\tilh)-I(\tilh)\bigr).
\\
&=& \ep-2\gamma= -\gamma.
\end{eqnarray*}
This completes the proof.
\end{pf*}

\begin{pf*}{Proof of Theorem \ref{specialnorm}}
By Theorem \ref{soln},
%
%
\begin{equation}
\label{xvar} \lim_{n\to\infty}\psi_n=\sup
_{h\in\mm}\bigl(T(h)-I(h)\bigr).
\end{equation}
By H\"older's inequality,
\[
t(H_i,h)\leq\iint_{[0,1]^2} h(x,y)^{e(H_i)}\,dx\,dy.
\]
Thus, by the nonnegativity of $\beta_2,\ldots,\beta_k$,
\begin{eqnarray*}
T(h)&\leq&\beta_1t(H_1,h)+\sum
_{i=2}^k\beta_i\iint
_{[0,1]^2}h(x,y)^{e(H_i)}\,dx\,dy
\\
&=&\iint_{[0,1]^2}\sum_{i=1}^k
\beta_ih(x,y)^{e(H_i)}\,dx\,dy.
\end{eqnarray*}
On the other hand, the inequality in the above display becomes an
equality if $h$ is a constant function. Therefore, if $u$ is a point
in $[0,1]$ that maximizes
\[
\sum_{i=1}^k\beta_iu^{e(H_i)}-I(u),
\]
then the constant function $h(x,y)\equiv u$ solves the variational
problem (\ref{xvar}). To see that constant functions are the only
solutions, assume that there is at least one $i$ such that the graph
$H_i$ has at least one vertex with two or more neighbors. The above
steps show that if $h$ is a maximizer, then for each $i$,
%
%
\begin{equation}
\label{equality} t(H_i,h)=\iint_{[0,1]^2}h(x,y)^{e(H_i)}
\,dx\,dy.
\end{equation}
In other words, equality holds in H\"older's inequality. Suppose that
$H_i$ has vertex set $\{1,2,\ldots, k\}$ and vertices $2$ and $3$ are
both neighbors of $1$ in $H_i$. Recall that
\[
t(H_i,h) = \int_{[0,1]^k} \prod
_{\{j,l\}\in E(H_i)} h(x_j, x_l) \,dx_1
\cdots dx_k.
\]
In particular, the integrand contains the product
$h(x_1,x_2)h(x_1,x_3)$. From this and the criterion for equality in
H\"older's inequality, it follows that $h(x_1,x_2) = h(x_1,x_3)$ for
almost every $(x_1,x_2,x_3)$. Using the symmetry of $h$ one can now
easily conclude that $h$ is almost everywhere a constant function.

If the condition does not hold, then each $H_i$ is a union of
vertex-disjoint edges. Assume that some $H_i$ has more than one edge.
Then again by (\ref{equality}) it follows that $h$ must be a constant
function.

Finally, if each $H_i$ is just a single edge, then the maximization
problem (\ref{xvar}) can be explicitly solved and the solutions are
all constant functions.
\end{pf*}

\begin{pf*}{Proof of Theorem \ref{specialbehave}}
The assertions about graph limits in this theorem are direct
consequences of Theorems \ref{limitbehave} and \ref{specialnorm}.
Since $\sum_{i=1}^k \beta_i u^{e(H_i)}$ is a polynomial function of
$u$ and $I(u)$ is sufficiently well-behaved, showing that $U$ is a
finite set is a simple analytical exercise.
\end{pf*}

\begin{pf*}{Proof of Theorem \ref{degen}}
Fix $\beta_1 < 0$ such that $c_1< c_2$. As a preliminary step, let us
prove that for any $\beta_2 >0$,
%
%
\begin{equation}
\label{prelim} \lim_{n\to\infty}\pp\bigl(f(G_n)
\in(c_1, c_2)\bigr)=0.
\end{equation}
Fix $\beta_2 > 0$. Let $u$ be any maximizer of $\ell$. Then
by Theorem \ref{specialbehave}, it suffices to prove that either
$u< e^{\beta_1}/(1+e^{\beta_1})$ or $u > 1+1/2\beta_1$. This is
proved as follows. Define a function $g\dvtx[0,1]\to\rr$ as
\[
g(v):= \ell\bigl(v^{1/3}\bigr).
\]
Then $\ell$ is maximized at $u$ if and only if $g$ is maximized at
$u^3$. Since $\ell$ is a bounded continuous function and $\ell'(0)=
\infty$, $\ell'(1)=-\infty$, $\ell$ cannot be maximized at $0$ or $1$.
Therefore, the same is true for $g$. Let $v$ be a point in $(0,1)$ at
which $g$ is maximized. Then $g''(v)\le0$. A simple computation shows
that
\[
g''(v) = \frac1{9v^{5/3}} \biggl(-2
\beta_1 + \log\frac
{v^{1/3}}{1-v^{1/3}}- \frac1{2(1-v^{1/3}
)} \biggr).
\]
Thus, $g''(v)\le0$ only if
\[
\log\frac{v^{1/3}}{1-v^{1/3}} \le\beta_1 \quad\mbox{or}\quad-\frac
{1}{2(1-v^{1/3})} \le
\beta_1.
\]
This shows that a maximizer $u$ of $\ell$ must satisfy $u\le c_1$ or
$u \ge c_2$. Now, if $u=c_1$, then $u < c_2$, and therefore the above
computations show that $g''(v) > 0$, where $v = u^3$. Similarly, if
$u=c_2$ then $u > c_1$ and again $g''(v)>0$. Thus, we have proved that
$u < c_1$ or $u> c_2$.
By Theorem \ref{limitbehave}, this completes the proof of (\ref
{prelim}) when
$\beta_2> 0$.

Now notice that as $\beta_2\ra\infty$, $\sup_{u\le a}\ell(u) \sim
\beta_2 a^3$ for any fixed $a \le1$. This shows that as $\beta_2\ra
\infty$, any maximizer of $\ell$ must eventually be larger than
$1+1/2\beta_1$. Therefore, for sufficiently large $\beta_2$,
%
%
\begin{equation}
\label{prelim1} \lim_{n \ra\infty} \pp\bigl(f(G_n) <
c_2\bigr) = 0.
\end{equation}
Next consider the
case $\beta_2\le0$. Let $\tilf^*$ be the set of maximizers of
$T(\tilh)-I(\tilh)$. Take any $\tilh\in\tilf^*$ and let $h$ be a
representative element of $\tilh$. Let $p= e^{2\beta_1}/(1+e^{2\beta_1})$.
An easy verification shows that
\[
T(h)-I(h) = \beta_2t(H_2, h) - I_p(h) -
\tfrac12\log(1-p),
\]
where $I_p(h)$ is defined as in (\ref{ipdef2}). Define a new function
\[
h_1(x,y):= \min\bigl\{h(x,y), p\bigr\}.
\]
Since the function $I_p$ defined in (\ref{ipdef1}) is minimized at
$p$, it follows that for all $x,y\in[0,1]$, $I_p(h_1(x,y))\le
I_p(h(x,y))$. Consequently, $I_p(h_1)\le I_p(h)$. Again, since
$\beta_2 \le0$ and $h_1\le h$ everywhere, $\beta_2 t(H_2, h_1) \ge
\beta_2 t(H_2, h)$. Combining these observations, we see that $T(h_1)
- I(h_1) \ge T(h)-I(h)$. Since $h$ maximizes $T-I$ it follows that
equality must hold at every step in the above deductions, from which
it is easy to conclude that $h=h_1$ a.e. In other words, $h(x,y)\le p$
a.e. This is true for every $\tilh\in\tilf^*$. Since $p < c_1$, the
above deduction coupled with Theorem \ref{limitbehave} proves that when
$\beta
_2\le0$,
%
%
\begin{equation}
\label{prelim2} \lim_{n\ra\infty} \pp\bigl(f(G_n) >
c_1\bigr) = 0.
\end{equation}
Recalling that $\beta_1$ is fixed, define
\[
a_n(\beta_2):= \pp\bigl(f(G_n)>
c_1\bigr),\qquad b_n(\beta_2):= \pp
\bigl(f(G_n) < c_2\bigr).
\]
Let $A_n$ and $B_n$ denote the events in brackets in the above display.
A simple computation shows that
\[
a_n'(\beta_2) = \frac{6}{n}\cov
\bigl(1_{A_n}, \Delta(G_n)\bigr) \quad\mbox{and}\quad
b_n'(\beta_2) = \frac{6}{n}\cov
\bigl(1_{B_n}, \Delta(G_n)\bigr),
\]
where $\Delta(G_n)$ is the number of triangles in $G_n$.
As noted at the end of Section~\ref{sec4}, the exponential random
graph model with $\beta_2\ge0$ satisfies the FKG criterion \cite
{fkg71}. Therefore, the above identities show that on the nonnegative
axis, $a_n$ is a nondecreasing function and $b_n$ is a nonincreasing function.

Let $q_1:= \sup\{x\in\rr\dvtx \lim_{n\ra\infty} a_n(x) = 0\}$. By
equation (\ref{prelim1}), $q_1<\infty$ and by equation (\ref
{prelim2}) $q_1 \ge0$. Similarly, if $q_2:= \inf\{x\in\rr\dvtx \lim
_{n\ra
\infty} b_n(x) = 0\}$, then $0\le q_2 < \infty$. Also,
clearly, $q_1 \le q_2$ since $a_n+ b_n \ge1$ everywhere. We claim that
\mbox{$q_1 = q_2$}. This would complete the proof by the monotonicity of $a_n$
and $b_n$.

To prove that $q_1 = q_2$, suppose not. Then $q_1 < q_2$. Then for any
$\beta_2 \in(q_1, q_2)$, $\limsup a_n(\beta_2) > 0$ and $\limsup
b_n(\beta_2) > 0$. Now,
\[
0\le a_n(\beta_2)+b_n(\beta_2)-1
= \pp\bigl(f(G_n) \in(c_1, c_2)\bigr).
\]
Therefore by (\ref{prelim}),
\[
\lim_{n\ra\infty}\bigl(a_n(\beta_2) +
b_n(\beta_2) - 1\bigr) = 0.
\]
Thus, for any $\beta_2 \in(q_1, q_2)$, $\limsup(1-b_n(\beta_2)) >
0$. By Theorem \ref{specialbehave}, this implies that the function $\ell
$ has
a maximum in $[c_2, 1]$. Similarly, for any $\beta_2\in(q_1, q_2)$,
$\limsup(1-a_n(\beta_2)) > 0$ and therefore the function $\ell$ has
a maximum in $[0, c_1]$. Now fix $q_1 < \beta_2<\widetilde{\beta}_2<
q_2$, and let $\ell$ and $\widetilde{\ell}$ denote the two $\ell
$-functions corresponding to $\beta_2$ and $\widetilde{\beta}_2$,
respectively. That is,
\[
\ell(u) = \beta_1 u +\beta_2 u^3 - I(u),\qquad
\widetilde{\ell}(u) = \beta_1 u + \widetilde{\beta}_2u^3
- I(u).
\]
By the above argument, $\ell$ attains its maximum at some point $u_1
\in[0,c_1]$ and at some point $u_2\in[c_2,1]$. (There may be other
maxima, but that is irrelevant for us.) Note that
\[
\max_{u\le c_1} \widetilde{\ell}(u) = \max_{u\le c_1}
\bigl(\ell(u) + (\widetilde{\beta}_2-\beta_2) u^3
\bigr) \le\ell(u_1) + (\widetilde{\beta}_2-
\beta_2) c_1^3.
\]
On the other hand
\[
\max_{u\ge c_2} \widetilde{\ell}(u) \ge\widetilde{\ell}(u_2) =
\ell(u_2) + (\widetilde{\beta}_2-\beta_2)
u_2^3 \ge\ell(u_2) + (\widetilde{\beta
}_2-\beta_2) c_2^3.
\]
Since $\ell(u_1)=\ell(u_2)$, $\widetilde{\beta}_2 > \beta_2$ and $c_2
> c_1$, this shows that
\[
\max_{u\le c_1} \widetilde{\ell}(u) < \max_{u\ge c_2}
\widetilde{\ell}(u),
\]
contradicting our previous deduction that $\widetilde{\ell}$ has maxima
in both $[0,c_1]$ and $[c_2,1]$. This proves that $q_1 = q_2$.
\end{pf*}

\begin{pf*}{Proof of Theorem \ref{euler}}
Let $g$ be a symmetric bounded measurable function from $[0,1]$ into
$\rr$. For each $u\in\rr$, let
\[
h_u(x,y):= h(x,y) + ug(x,y).
\]
Then $h_u$ is a symmetric bounded measurable function from $[0,1]$ into
$\rr$. First, suppose that $h$ is bounded away from $0$ and $1$. Then
$h_u\in\mm$ for every $u$ sufficiently small in magnitude. Since $h$
maximizes $T(h)-I(h)$ among all elements of $\mm$, therefore under the
above assumption, for all $u$ sufficiently close to zero,
\[
T(h_u)-I(h_u)\le T(h)-I(h).
\]
In particular,
%
%
\begin{equation}
\label{eulercond} \frac{d}{du}\bigl(T(h_u)-I(h_u)
\bigr) \bigg|_{u=0} = 0.
\end{equation}
It is easy to check that $T(h_u)-I(h_u)$ is differentiable in $u$ for
any $h$ and $g$. In particular, the derivative is given by
\[
\frac{d}{du}\bigl(T(h_u)-I(h_u)\bigr) = \sum
_{i=1}^k \beta_i\,
\frac{d}{du} t(H_i, h_u) - \frac{d}{du}I(h_u).
\]
Now,
\begin{eqnarray*}
\frac{d}{du}I(h_u) &=& \iint\frac{d}{du}I\bigl(h(x,y) +
u g(x,y)\bigr)\,dy\,dx
\\
&=& \frac12\iint g(x,y)\log\frac{h_u(x,y)}{1-h_u(x,y)}\,dy\,dx.
\end{eqnarray*}
Consequently,
\[
\frac{d}{du}I(h_u) \bigg|_{u=0} = \frac12\iint g(x,y)
\log\frac
{h(x,y)}{1-h(x,y)}\,dy\,dx.
\]
Next, note that
\begin{eqnarray*}
&&\frac{d}{du}t(H_i,h_u)
\\
&&\qquad=\int_{[0,1]^{V(H)}}\sum_{(r,s)\in E(H_i)}g(x_r,x_s)
\mathop{\prod_{(r',s')\in E(H_i)}}_{(r',s')\neq(r,s)}
h_u(x_{r'},x_{s'})\prod
_{v\in V(H)}\,dx_v
\\
&&\qquad=\iint g(x,y)\Delta_{H_i}h_u(x,y)\,dy\,dx.
\end{eqnarray*}
Combining the above computations and (\ref{eulercond}), we see that
for any symmetric bounded measurable $g\dvtx[0,1]\to\rr$,
\[
\iint g(x,y) \Biggl(\sum_{i=1}^k
\beta_i\Delta_{H_i}h(x,y)-\frac12\log\frac{h(x,y)}{1-h(x,y)}
\Biggr)\,dy\,dx=0.
\]
Taking $g(x,y)$ equal to the function within the brackets (which is
bounded since $h$ is assumed to be bounded away from $0$ and $1$), the
conclusion of the theorem follows.

Now note that the theorem was proved under the assumption that $h$ is
bounded away from $0$ and $1$. We claim that this is true for any $h$
that maximizes $T(h)-I(h)$. To prove this claim, take any such $h$.
Fix $p \in(0,1)$. For each $u \in[0,1]$, let
\[
h_{p,u}(x,y):= h(x,y) + u\bigl(p-h(x,y)\bigr)_+. 
\]
In other words, $h_{p,u}$ is simply $h_u$ with $g= (p-h)_+$. Then
certainly, $h_{p,u}$ is a symmetric bounded measurable function
from $[0,1]^2$ into $[0,1]$. Note that
\[
\frac{d}{du} h_{p,u}(x,y) = \bigl(p-h(x,y)\bigr)_+.
\]
Using this, an easy computation as above shows that
\begin{eqnarray*}
&&\frac{d}{du} \bigl(T(h_{p,u})-I(h_{p,u}) \bigr)
\bigg|_{u=0}
\\
&&\qquad=\iint\Biggl(\sum_{i=1}^k
\beta_i\Delta_{H_i}h(x,y)-\frac12\log\frac{h(x,y)}{1-h(x,y)}
\Biggr) \bigl(p-h(x,y)\bigr)_+\,dy\,dx
\\
&&\qquad\geq\iint\biggl(-C-\frac12\log\frac
{h(x,y)}{1-h(x,y)} \biggr) \bigl(p-h(x,y)
\bigr)_+\,dy\,dx,
\end{eqnarray*}
where $C$ is a positive constant depending only on $\beta_1,\ldots,
\beta_k$ and $H_1,\ldots, H_k$ (and not on $p$ or $h$). When
$h(x,y)=0$, the integrand is interpreted as $\infty$, and when
$h(x,y)=1$, the integrand is interpreted as
$0$. 

Now, if $p$ is so small that
\[
-C - \frac{1}{2}\log\frac{p}{1-p} > 0,
\]
then the previous display proves that the derivative of
$T(h_{p,u})-I(h_{p,u})$ with respect to $u$ is strictly positive at
$u=0$ if $h < p$ on a set of positive Lebesgue measure. Hence, $h$
cannot be a maximizer of $T-I$ unless $h\ge p$ almost everywhere. This
proves that any maximizer of $T-I$ must be bounded away from zero.
A~similar argument with $g= -(h-p)_+$ shows that it must be bounded
away from $1$, and hence completes the proof of the theorem.
\end{pf*}

\begin{pf*}{Proof of Theorem \ref{contraction}}
It suffices to prove that the maximizer of $T(h)-I(h)$ as $h$ varies
over $\mm$ is unique. This is because: if $h$ is a maximizer, then
so is $h_\sigma(x,y):= h(\sigma x, \sigma y)$ for any measure
preserving bijection $\sigma\dvtx[0,1]\to[0,1]$. The only functions that
are invariant under such transforms are constant functions. 

Let $\Delta_H$ be the operator defined in Section \ref{sec62}. Let $\|
\cdot\|_\infty$ denote the $L^\infty$ norm on $\mm$ (i.e.,
the essential supremum of the absolute value). Let $h$ and $g$ be
two maximizers of $T-I$. For any finite simple graph $H$, a simple
computation shows that
\begin{eqnarray*}
\|\Delta_H h - \Delta_H g\|_\infty&\le&\sum
_{(r,s)\in E(H)} \| \Delta_{H, r,s} h -
\Delta_{H,r,s} g\|_\infty
\\
&\le& e(H) \bigl(e(H) - 1\bigr) \|h - g\|_\infty.
\end{eqnarray*}
Using the above inequality, Theorem \ref{euler} and the inequality
\[
\biggl|\frac{e^x}{1+e^x} - \frac{e^y}{1+e^y} \biggr|\le\frac{|x-y|}{4}
\]
(easily proved by the mean value theorem) it follows that for almost
all $x,y$,
\begin{eqnarray*}
\bigl|h(x,y)-g(x,y)\bigr| &=& \biggl| \frac{e^{2\sum_{i=1}^k \beta_i \Delta_{H_i}
h(x,y)}}{1 + e^{2\sum_{i=1}^k \beta_i \Delta_{H_i} h(x,y)}} - \frac
{e^{2\sum_{i=1}^k \beta_i \Delta_{H_i} g(x,y)}}{1 + e^{2\sum
_{i=1}^k \beta_i \Delta_{H_i} g(x,y)}} \biggr|
\\
&\le&\frac{1}{2}\sum_{i=1}^k |
\beta_i| \|\Delta_{H_i} h - \Delta_{H_i} g
\|_\infty
\\
&\le&\frac{1}{2} \|h-g\|_\infty\sum_{i=1}^k
|\beta_i| e(H_i) \bigl(e(H_i)-1\bigr).
\end{eqnarray*}
If the coefficient of $\|h-g\|_\infty$ in the last expression is
strictly less than $1$, it follows that $h$ must be equal to $g$ a.e.
\end{pf*}

\begin{pf*}{Proof of Theorem \ref{symmbreaking}}
Fix $\beta_1$. Let $p = e^{2\beta_1}/(1+e^{2\beta_1})$ and $\gamma:=
-\beta_2$, so that for any $h\in\mm$,
\[
T(h)-I(h) = -\gamma t(H_2, h) - I_p(h) -
\tfrac{1}{2}\log(1-p).
\]
Assume without loss of generality that $\beta_2< 0$. Suppose $u$ is a
constant such that the function $h(x,y)\equiv u$ maximizes
$T(h)-I(h)$, that is, minimizes $\gamma t(H_2,h) + I_p(h)$. Note that
\[
\gamma t(H_2, h) + I_p(h) = \gamma u^3 +
I_p(u).
\]
Clearly, the definition of $u$ implies that $\gamma u^3 + I_p(u) \le
\gamma x^3 + I_p(x)$ for all $x\in[0,1]$. This implies that $u$ must
be in $(0,1)$, because the derivative of $x\mapsto\gamma x^3 +
I_p(x)$ is $-\infty$ at $0$ and $\infty$ at $1$. Thus,
\[
0=\frac{d}{dx}\bigl(\gamma x^3+I_p(x)\bigr)
\bigg|_{x=u}=3\gamma u^2+\frac12\log\frac{u}{1-u} -
\frac12\log\frac{p}{1-p},
\]
which shows that $u \le c(\gamma)$, where $c(\gamma)$ is a function of
$\gamma$ such that
\[
\lim_{\gamma\to\infty} c(\gamma)=0.
\]
This shows that
%
%
\begin{equation}
\label{cgamma} \lim_{\gamma\to\infty}\min_{0\le x\le1} \bigl(
\gamma x^3+I_p(x) \bigr)=I_p(0)=\frac12\log
\frac1{1-p}.
\end{equation}
Next, let $g$ be the function
\[
g(x,y):= \cases{ 0, &\quad if $x, y$ on same side of $1/2$,
\cr
p, &\quad if not.}
\]
Clearly, for almost all $(x,y,z)$, $g(x,y)g(y,z)g(z,x)=0$. Thus,
$t(H_2, g) = 0$. A~simple computation shows that
\[
I_p(g) = \frac{1}{4}\log\frac{1}{1-p}.
\]
Thus, $\gamma t(H_2,g)+I_p(g) = \frac{1}{4}\log\frac{1}{1-p}$. This
shows that if $\gamma$ is large enough (depending on $p$ and hence
$\beta_1$), then $T-I$ cannot be maximized at a constant function. The
rest of the conclusion follows easily from Theorem \ref{limitbehave} and
the compactness of $\mmm$.
\end{pf*}

\begin{pf*}{Proof of Theorem \ref{pstarcase}}
Take any $h\in\mm$. Note that
\begin{eqnarray*}
t(H_j, h) &=& \int_{[0,1]^j} h(x_1,x_2)h(x_1,x_3)
\cdots h(x_1, x_j)\,dx_1\cdots
dx_j
\\
&=& \int_0^1 M(x)^j \,dx,
\end{eqnarray*}
where
\[
M(x)=\int_0^1 h(x,y)\,dy.
\]
Since $I$ is a convex function,
\[
\int_0^1 I\bigl(h(x,y)\bigr)\,dy \ge I
\bigl(M(x)\bigr)
\]
with equality if and only if $h(x,y)$ is the same for almost all $y$.
Thus, putting
\[
P(u):= \sum_{j=1}^k \beta_j
u^j,
\]
we get
\[
T(h)-I(h) = \int_0^1 P\bigl(M(x)\bigr)\,dx -
I(h)\le\int_0^1 \bigl(P\bigl(M(x)\bigr) - I
\bigl(M(x)\bigr)\bigr) \,dx
\]
with equality if and only if for almost all $x$, (a) $h(x,y)$ is
constant as a function of~$y$, and (b) $M(x)$ equals a value $u^*$ that
maximizes $P(u)-I(u)$. By the symmetry of $h$, the condition (a)
implies that $h$ is constant almost everywhere. The condition (b) gives
the set of possible values of this constant. The rest follows as in the
proofs of Theorems \ref{specialnorm} and \ref{specialbehave}.
\end{pf*}

%
%
\begin{lem}\label{chromlmm}
Let $r$ be any integer $\ge\chi(H)$. Let $K_r$ be the complete graph
on $r$ vertices. Then for any symmetric measurable $h\dvtx[0,1]^2 \ra\{
0,1\}$, if $t(K_r, h) > 0$ then $t(H, h) > 0$.
\end{lem}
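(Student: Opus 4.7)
The plan is to combine a proper $r$-coloring of $H$ with a Lebesgue density argument. Since $r \ge \chi(H)$, there is a proper coloring $c: V(H) \to \{1,\ldots, r\}$ with $c(u) \ne c(v)$ for every $(u,v) \in E(H)$. I will find measurable sets $A_1, \ldots, A_r \subseteq [0,1]$ of positive Lebesgue measure which are ``almost fully connected'' under $h$, and then lower bound $t(H,h)$ by restricting the integral defining it to the product $\prod_{v\in V(H)} A_{c(v)}$.

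Because $h$ takes values in $\{0,1\}$, writing $E := \{h=1\} \subseteq [0,1]^2$ we have $t(K_r,h) = |S|$ where $S := \{(x_1,\ldots,x_r) \in [0,1]^r : (x_i, x_j) \in E \text{ for all } i \ne j\}$. By hypothesis $|S| > 0$, and since the diagonals $\{x_i = x_j\}$ are null, almost every point of $S$ has distinct coordinates in $(0,1)$; by the Lebesgue density theorem I fix a density point $(x_1^*,\ldots, x_r^*)$ of $S$ with all coordinates distinct and in $(0,1)$. For $\delta > 0$ small set $A_i := [x_i^* - \delta, x_i^* + \delta] \subseteq (0,1)$. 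Given any $\epsilon > 0$, for $\delta$ small enough the density property gives
\[
\left| \left( \prod_{i=1}^r A_i \right) \setminus S \right| \le \epsilon \prod_{i=1}^r |A_i|.
\]

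Since $\bigl(\prod_i A_i\bigr) \setminus S = \bigcup_{i<j} \{(x_k) \in \prod_k A_k : (x_i, x_j) \notin E\}$ and the $(i,j)$-term has measure exactly $|(A_i \times A_j) \setminus E| \cdot \prod_{k \ne i, j} |A_k|$, the above bound forces
\[
|(A_i \times A_j) \setminus E| \le \epsilon |A_i| |A_j| \quad \text{for each } i \ne j.
\]
Now restrict the integral defining $t(H, h)$ to $\prod_{v \in V(H)} A_{c(v)}$. The integrand is $1$ except on the union over $(u,v) \in E(H)$ of the sets where $(y_u, y_v) \notin E$; by $c(u) \ne c(v)$, Fubini and the previous display, each such set has measure at most $\epsilon |A_{c(u)}||A_{c(v)}| \prod_{w \ne u,v} |A_{c(w)}| = \epsilon \prod_v |A_{c(v)}|$. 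A union bound therefore yields
\[
t(H, h) \ge (1 - \epsilon |E(H)|) \prod_v |A_{c(v)}|,
\]
which is strictly positive once $\epsilon < 1/|E(H)|$, and the proof is complete.

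There is no serious obstacle: the only content is the transfer from the global density estimate on $(\prod A_i) \setminus S$ to the pairwise estimates on $(A_i \times A_j) \setminus E$, which is immediate from subadditivity applied to the canonical decomposition of $S^c$. The result is a baby counting lemma, consistent in spirit with blow-up results in graph limit theory.
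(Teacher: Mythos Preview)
Your proof is correct and follows the same overall strategy as the paper's: locate small intervals $A_1,\ldots,A_r$ on which $h$ is pairwise close to $1$, and then use the fact that $r\ge \chi(H)$ to route the vertices of $H$ through these intervals. The execution differs in two places, and in both your version is the cleaner one. First, the paper finds the good intervals by taking dyadic averages $h_n$ of $h$, invoking martingale convergence to pick points $x_1,\ldots,x_r$ at which $h_n(x_i,x_j)\to 1$, and then taking the $A_i$ to be dyadic intervals; you bypass this by applying the Lebesgue density theorem directly to the set $S\subseteq[0,1]^r$, which gives the pairwise bounds $|(A_i\times A_j)\setminus E|\le \epsilon|A_i||A_j|$ in one stroke. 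Second, the paper embeds $H$ by first observing that $H$ is a subgraph of a complete $r$-partite blow-up $K_r^u$, proving $t(K_r^u,h)>0$ via a probabilistic argument with independent random points in the $A_i$, and then deducing $t(H,h)>0$; you skip the intermediary and use a proper $r$-coloring $c$ of $H$ directly, restricting the integral to $\prod_v A_{c(v)}$ and applying a union bound. (A tiny quibble: the step from the density estimate on $\bigl(\prod_i A_i\bigr)\setminus S$ to the pairwise bounds uses monotonicity---each $(i,j)$-set is contained in the union---rather than subadditivity, but the conclusion is of course correct.)
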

\begin{pf}
Let $h_n(x,y)$ be the average value of $h$ in the dyadic square of
width $2^{-n}$ containing the point $(x,y)$. A standard martingale
argument implies that the sequence of functions $\{h_n\}_{n\ge1}$
converges to $h$ almost everywhere. For any positive integer $u$, let
$K_r^u$ denote the complete $r$-partite graph on $ru$ vertices, where
each partition consists of $u$ vertices (so that $K_r^1 = K_r$). Since
$r\ge\chi(H)$, it is easy to see that there exists $u$ so large that
$H$ is a subgraph of $K_r^u$ [i.e., $V(H)\subseteq V(K^u_r)$ and $E(H)
\subseteq E(K^u_r)$]. Fix such a $u$.

By the almost everywhere convergence of $h_n$ to $h$ and the assumption
that $t(K_r, h) >0$, there is a set of $r$ distinct points $x_1,\ldots,
x_r\in[0,1]$ that do not lie on the boundary of any dyadic interval,
such that $h(x_i, x_j) > 0$ and $\lim_{n\ra\infty} h_n(x_i, x_j) =
h(x_i, x_j)$ for each $1\le i\ne j\le r$. Since $h$ is $\{0,1\}
$-valued, $h(x_i, x_j) =1$ for each $i\ne j$. Choose $n$ so large that
for each $i\ne j$,
\[
h_n(x_i, x_j) \ge1- \ep,
\]
where $\ep= 1/2r^2u^2 $. Let $(X^s_i)_{1\le i \le r, 1\le s\le u}$
be independent random variables, where $X^s_i$ is uniformly distributed
in the dyadic interval of width $2^{-n}$ containing $x_i$. Then for
each $1\le i\ne j\le r$, $1\le q, s \le u$,
\[
\pp\bigl(h\bigl(X_i^q, X_j^s
\bigr)=1\bigr) = h_n(x_i, x_j)\ge1-\ep.
\]
Therefore,
\[
\pp\bigl(h\bigl(X_i^q, X_j^s
\bigr)=1 \mbox{ for all } 1\le i\ne j\le r, 1\le q, s \le u\bigr)
\ge1-r^2u^2\ep=1/2.
\]
Let $(Y^s_i)_{1\le i \le r, 1\le s\le u}$ be independent random
variables uniformly distributed in $[0,1]$. Conditional on the event
that $Y^s_i$ belongs to the dyadic interval of width $2^{-n}$
containing $x_i$, $Y_i^s$ has the same distribution as $X_i^s$. As a
consequence of the last display, this shows that
\begin{eqnarray*}
t\bigl(K_r^u, h\bigr) &=& \pp\bigl(h
\bigl(Y_i^q, Y_j^s\bigr)=1
\mbox{ for all } 1\le i\ne j\le r, 1\le q, s \le u\bigr)
\\
&\ge&2^{-nru}\pp\bigl(h\bigl(X_i^q,
X_j^s\bigr)=1 \mbox{ for all } 1\le i\ne j\le r, 1\le
q, s \le u\bigr)>0.
\end{eqnarray*}
Since $H$ is a subgraph of $K_r^u$, therefore $t(H, h) > 0$.
\end{pf}
%
%
\begin{theorem}\label{negopt}
Let $g$ be the function defined in (\ref{chrom}). Take any $p\in
(0,1)$. If $f$ is any element of $\mm$ that minimizes $I_p(f)$ among
all $f$ satisfying $t(H,f)=0$, then $\widetilde{f}= p\widetilde{g}$.
\end{theorem}
\begin{pf}
Take any minimizer $f$. (Minimizers exist due to the Lov\'asz--Szegedy
compactness theorem \cite{lovaszszegedy07}, Theorem 5.1, and the lower
semicontinuity of $I_p$.) First, note that $f\le p$ almost everywhere:
if not, then $I_p(f)$ can be decreased by replacing $f$ with $\min\{
f,p\}$, which retains the condition $t(H,f) = 0$.

Next, note that for almost all $x,y$, $f(x,y)=0$ or $p$. If not, then
redefine $f$ to be equal to $p$ wherever $f$ was positive. This
decreases the entropy while retaining the condition $t(H,f)=0$.

Let $h = f/p$. Then $h$ takes value $0$ or $1$ almost everywhere and
$h$ minimizes $I_p(ph)$ among all symmetric measurable $h\dvtx
[0,1]^2\ra\{
0,1\}$ satisfying \mbox{$t(H, h) = 0$}. Equivalently, $h$ maximizes $\iint
h(x,y) \,dx \,dy$ among all symmetric measurable $h\dvtx[0,1]^2 \ra\{
0,1\}$
satisfying $t(H,h)=0$. Our goal is to show that $\wh= \widetilde{g}$.

Let $r:= \chi(H)$. Let $X_0,X_1,X_2,\ldots$ be a sequence of i.i.d.
random variables uniformly distributed in $[0,1]$. Let
\[
\mathcal{R}:= \bigl\{i\dvtx h(X_i, X_j) = 1 \mbox{ for
all } 1\le j< i\bigr\},
\]
and let $R:= |\mathcal{R}|$. Let $\lambda(x):= \int h(x,y) \,dy$, so
that for any given $i$,
\[
\pp\bigl(h(X_i, X_j) = 1 \mbox{ for all } 1\le j< i
\bigr) = \ee\bigl(\lambda(X_i)^{i-1}\bigr) = \ee\bigl(
\lambda(X_0)^{i-1}\bigr).
\]
Thus,
%
%
\begin{eqnarray}\label{ineqs}
\ee(R) &=& \sum_{i=1}^\infty\pp
\bigl(h(X_i, X_j) = 1 \mbox{ for all } 1\le j< i\bigr)
\nonumber
\\
&=& \sum_{i=1}^\infty\ee\bigl(
\lambda(X_0)^{i-1}\bigr)
\\
&\ge&\sum_{i=1}^\infty\bigl(\ee
\lambda(X_0)\bigr)^{i-1} = \frac{1}{1-\ee
\lambda(X_0)} =
\frac{1}{1-\iint h(x,y) \,dx\,dy}.\nonumber
\end{eqnarray}
Let $g$ be the function defined in (\ref{chrom}). Suppose the vertex
set of $H$ is $\{1,\ldots, k\}$ for some integer $k$. If $t(H, g) >
0$, then there exist $x_1,\ldots, x_k$ such that $g(x_i,x_j)=1$
whenever $\{i,j\}$ is an edge in $H$. By the definition of $g$, this
implies that $H$ can be colored by $r-1$ colors so that no two adjacent
vertices receive the same color; since this is false, therefore $t(H,
g)$ must be zero. By the optimality property of $h$, this gives
\[
\iint h(x,y) \,dx \,dy \ge\iint g(x,y) \,dx \,dy = 1- \frac{1}{r-1}.
\]
Therefore by (\ref{ineqs}),
\[
\ee(R) \ge r-1.
\]
Again by Lemma \ref{chromlmm}, $t(K_r, h) = 0$. Therefore, $R\le r-1$ almost
surely. Combined with the above display, this shows that equality must
hold in (\ref{ineqs}) and $R = r-1$ almost surely. In particular, $\ee
(\lambda(X_0)^2 ) = (\ee\lambda(X_0))^2$ and $\ee\lambda(X_0) =
1-1/(r-1)$, which shows that
\[
\lambda(x) = 1-\frac{1}{r-1} \qquad\mbox{a.e.}
\]
For each $x$, let $A(x):= \{y\dvtx h(x,y) = 0\}$. Then $|A(x)| = 1/(r-1)$
a.e., where $|A(x)|$ denotes the Lebesgue measure of $A(x)$.

Define a random graph $G$ on $\{0,1,2,\ldots\}$ by including the edge
$(i,j)$ if and only if $h(X_i,X_j)=1$. Since $t(K_r, h)=0$, $G$ cannot
contain any copy of $K_r$. Thus, with probability $1$, $h(X_0, X_i)=0$
for some $i\in\mathcal{R}$. In other words, $\bigcup_{i\in\mathcal
{R}} A(X_i)$ cover almost all of $[0,1]$. Again, $|A(X_i)|= 1/(r-1)$
for all $i\in\mathcal{R}$ and $|\mathcal{R}|= r-1$ almost surely.
All this together imply that with probability $1$, $A(X_i)\cap A(X_j)$
has Lebesgue measure zero for all $i\ne j\in\mathcal{R}$, since
\[
\sum_{i, j\in\mathcal{R}, i<j} \bigl|A(X_i)\cap
A(X_j)\bigr| \le\sum_{i\in
\mathcal{R}}
\bigl|A(X_i)\bigr| - \biggl|\bigcup_{i\in\mathcal{R}}
A(X_i) \biggr| =0.
\]
Let $Y_1,Y_2,\ldots$ and $Z_1,Z_2, \ldots$ be i.i.d. random
variables uniformly distributed in $[0,1]$, that are independent of the
sequence $X_1,X_2,\ldots\,$. Since $t(K_r, h)=0$, with probability $1$
there cannot exist $l$ and a set $B$ of integers of size $r-2$ such
that $h(Y_l, X_i)= h(Z_l, X_i)=1$ for all $i\in B$, $h(X_i, X_j)=1$ for
all $i\ne j \in B$, and $h(Y_l, Z_l)=1$.

Now fix a realization of $X_1, X_2, \ldots\,$. This fixes the set
$\mathcal{R}$. Take any $i\in\mathcal{R}$. Let $I$ be the smallest
integer such that both $Y_I$ and $Z_I$ are in $A(X_i)$. Clearly $Y_I$
and $Z_I$ are independent and uniformly distributed in $A(X_i)$,
conditional on the sequence $X_1,X_2,\ldots$ and our choice of $i\in
\mathcal{R}$. By the observation from the preceding paragraph, $h(Y_I,
Z_I)=0$ with probability $1$, since the set $\mathcal{R}\setminus\{
i\}$ serves the role of~$B$.

This shows that given $X_1, X_2,\ldots\,$, the sets $A(X_i)$ have the
property that for almost all $y,z\in A(X_i)$, $h(y,z)=0$. Since
$\lambda(x)=1 - 1/(r-1)$ a.e. and $|A(X_i)|=1/(r-1)$, this shows that
for almost all $y\in A(X_i)$ and almost all $z\notin A(X_i)$, $h(y,z)=1$.

The properties of $(A(X_i))_{i\in\mathcal{R}}$ that we established
can be summarized as follows: the sets $A(X_i)$ are disjoint up to
errors of measure zero; each $A(X_i)$ has Lebesgue measure $1/(r-1)$
and together they cover the whole of $[0,1]$; for almost all $y,z\in
[0,1]$, $h(y,z)=0$ if they belong to the same $A(X_i)$, and $h(y,z)=1$
if $y\in A(X_i)$ and $z\in A(X_j)$ for some $i\ne j$. These properties
immediately show that $h$ is the same as the function $g$ up to a
rearrangement; the formal argument can be completed as follows.

Given $X_1,X_2,\ldots\,$, let $u\dvtx[0,1]\ra[0,1]$ be the map defined as
\[
u(x):= \mbox{minimum $i\in\mathcal{R}$ such that $x\in
A(X_i)$}.
\]
Note that with probability $1$, for almost all $x$ there is a unique
$i\in\mathcal{R}$ such that $x\in A(X_i)$. Let $\sigma\dvtx[0,1]\ra[0,1]$
be a measure-preserving bijection such that $x\mapsto u(\sigma x)$ is a
nonincreasing (we omit the construction). Then $\sigma$ maps the
intervals $[0,1/(r-1)]$, $[1/(r-1), 2/(r-1)],\ldots,[(r-2)/(r-1), 1]$
onto the sets $(A(X_i))_{i\in\mathcal{R}}$ up to errors of measure
zero. By the properties of $A(X_i)$ established above, this shows that
$h(\sigma x, \sigma y)$ is the same as $g(x,y)$ up to an error of
measure zero.
\end{pf}

\begin{pf*}{Proof of Theorem \ref{erdos}}
First, note that
\[
T_{\beta_2}(h) - I(h) = \beta_2 t(H, h) - I_p(h) -
\tfrac{1}{2}\log(1-p),
\]
where $p=e^{2\beta_1}/(1+e^{2\beta_1})$. Take a sequence $\beta
_2^{(n)} \ra-\infty$, and for each $n$, let $\wh_n$ be an element of
$\WF^*(\beta_2^{(n)})$. Let $\wh$ be a limit point of $\wh_n$
in $\mmm$. If $t(H, h) > 0$, then by the continuity of the map $t(H,
\cdot)$ and the boundedness of $I_p$,
\[
\lim_{n\to\infty}\psi\bigl(\beta_2^{(n)}\bigr)
=-\infty.
\]
But this is impossible since $\psi(\beta_2^{(n)})$ is uniformly
bounded below, as can be easily seen by considering the function $g$
defined in (\ref{chrom}) as a test function in the variational
problem. Thus, $t(H,h)=0$. If $f$ is a function such that $t(H,f)=0$
and $I_p(f)< I_p(h)$, then for all sufficiently large $n$,
\[
T_{\beta_2^{(n)}}(h_n)-I(h_n) < T_{\beta_2^{(n)}}(f)-I(f)
\]
contradicting the definition of $\WF^*(\beta_2)$. Thus, if $f$ is a
function such that \mbox{$t(H,f)=0$}, then $I_p(f)\ge I_p(h)$. By Theorem \ref
{negopt}, this shows that $\wh=p\widetilde{g}$. The compactness of
$\mmm$ now proves the first part of the theorem.

For the second part, first note that
\begin{eqnarray*}
\liminf_{n\ra\infty} \psi\bigl(\beta_2^{(n)}
\bigr) &\ge&\lim_{n\ra\infty
}\bigl(T_{\beta_2^{(n)}}(pg)-I(pg)\bigr)
= -I_p(pg)-\frac{1}{2}\log(1-p)
\\
&=& \frac{(\chi(H)-2)}{2(\chi(H)-1)}\log\frac{1}{1-p}.
\end{eqnarray*}
Next, note that by the lower-semicontinuity of $I_p$ and the fact that
$\beta_2^{(n)}$ is eventually negative,
\begin{eqnarray*}
\limsup_{n\ra\infty} \psi\bigl(\beta_2^{(n)}
\bigr) &=& \limsup_{n\ra\infty
} \bigl(\beta_2^{(n)}
t(H, h_n) - I_p(h_n)\bigr) -
\frac{1}{2}\log(1-p)
\\
&\le&\limsup_{n\ra\infty} \bigl( - I_p(h_n)
\bigr) - \frac{1}{2}\log(1-p)
\\
&\le&-I_p(pg) - \frac{1}{2}\log(1-p).
\end{eqnarray*}
The proof is complete.
\end{pf*}

\section*{Acknowledgements}

We thank Hans Anderson, Charles Radin, Austen Head, Susan Holmes,
Sumit Mukherjee, and especially Sukhada Fadnavis and Mei Yin for their
substantial help with this paper. We are particularly grateful to the
referee for an exceptionally thorough and useful report and the
improvement to Theorem~\ref{pstarcase}.


%

\printaddresses

\end{document}